\newtheorem{theorem}{Theorem}[section]
\newtheorem{corollary}[theorem]{Corollary}
\newtheorem{lemma}[theorem]{Lemma}
\newtheorem{proposition}[theorem]{Proposition}
\newtheorem{conjecture}[theorem]{Conjecture}
\theoremstyle{definition}
\newtheorem{definition}[theorem]{Definition}
\newtheorem{remark}[theorem]{Remark}
\theoremstyle{remark}
\renewcommand{\theclaim}{\textup{\theclaim}}
\newtheorem*{acknowledgements}{Acknowledgements}
\numberwithin{equation}{section}
\def\openone
\newbox\ipbox
\newcommand{\diracb}[1]{\left\langle #1\mathrel{\mathchoice

{\setbox\ipbox=\hbox{$\displaystyle \left\langle\mathstrut
#1\right.$}

\vrule height\ht\ipbox width0.25pt depth\dp\ipbox}

{\setbox\ipbox=\hbox{$\textstyle \left\langle\mathstrut
#1\right.$}

\vrule height\ht\ipbox width0.25pt depth\dp\ipbox}

{\setbox\ipbox=\hbox{$\scriptstyle \left\langle\mathstrut
#1\right.$}

\vrule height\ht\ipbox width0.25pt depth\dp\ipbox}

{\setbox\ipbox=\hbox{$\scriptscriptstyle \left\langle\mathstrut
#1\right.$}

\vrule height\ht\ipbox width0.25pt depth\dp\ipbox}

}\right. }
\newcommand{\dirack}[1]{\left. \mathrel{\mathchoice

{\setbox\ipbox=\hbox{$\displaystyle \left.\mathstrut
#1\right\rangle$}

\vrule height\ht\ipbox width0.25pt depth\dp\ipbox}

{\setbox\ipbox=\hbox{$\textstyle \left.\mathstrut
#1\right\rangle$}

\vrule height\ht\ipbox width0.25pt depth\dp\ipbox}

{\setbox\ipbox=\hbox{$\scriptstyle \left.\mathstrut
#1\right\rangle$}

\vrule height\ht\ipbox width0.25pt depth\dp\ipbox}

{\setbox\ipbox=\hbox{$\scriptscriptstyle \left.\mathstrut
#1\right\rangle$}

\vrule height\ht\ipbox width0.25pt depth\dp\ipbox}

} #1\right\rangle}
\newcommand{\beq}{\begin{equation}}
\newcommand{\eeq}{\end{equation}}
\newcommand{\bz}{\mathbb{Z}}
\newcommand{\br}{\mathbb{R}}
\newcommand{\bn}{\mathbb{N}}
\def\blfootnote{\xdef\@thefnmark{}\@footnotetext}
\newcommand{\Laba}{\textup{\L}}
\renewcommand{\mod}{\operatorname{mod}}
\def\T{\mathcal{T}}
\def\-{^{-1}}
\newcommand{\lcm}{\textup{lcm}}
\begin{document}

\title[Some reductions of the spectral set conjecture to integers]{Some reductions of the spectral set conjecture to integers}
\author{Dorin Ervin Dutkay}

\address{[Dorin Ervin Dutkay] University of Central Florida\\
	Department of Mathematics\\
	4000 Central Florida Blvd.\\
	P.O. Box 161364\\
	Orlando, FL 32816-1364\\
U.S.A.\\} \email{Dorin.Dutkay@ucf.edu}

\author{Chun-Kit Lai}

\address{[Chun-Kit Lai] McMaster University\\\\
	Department of Mathematics and Statistics\\
	120 Main Street West\\
	Hamilton, Ontario\\
	Canada L8S 4K1\\}

\email{cklai@math.mcmaster.ca}

\thanks{}
\subjclass[2000]{42A32,05B45}
\keywords{Fuglede conjecture, Coven-Meyerowitz conjecture, spectral, tile, Fourier basis}

\begin{abstract}
The spectral set conjecture, also known as the Fuglede conjecture, asserts that every bounded spectral set is a tile and vice versa.  While this conjecture remains open on ${\mathbb R}^1$, there are many results in the literature that discuss the relations among various forms of the Fuglede conjecture on ${\mathbb Z}_n$, ${\mathbb Z}$ and ${\mathbb R}^1$ and also the seemingly stronger universal tiling (spectrum) conjectures on the respective groups.  In this paper, we clarify the equivalences between these statements in dimension one. In addition, we show that if the Fuglede conjecture on ${\mathbb R}^1$ is true, then every spectral set with rational measure must have a rational spectrum. We then investigate the Coven-Meyerowitz property for finite sets of integers, introduced in \cite{CoMe99}, and we show that if the spectral sets and the tiles in ${\mathbb Z}$ satisfy the Coven-Meyerowitz property, then both sides of the Fuglede conjecture on ${\mathbb R}^1$ are true.
\end{abstract}
\maketitle \tableofcontents

\section{Introduction}

Let $\Omega$ be a bounded measurable set on ${\mathbb R}^d$ of positive finite Lebesgue measure. We say that it is a {\it spectral set} if there exists orthogonal basis of the form $\{e^{2\pi i \langle\lambda,\cdot\rangle}\}_{\lambda\in\Lambda}$  in $L^2(\Omega)$ and $\Lambda$ is called a {\it spectrum} of $\Omega$. In studying the extension of commuting self-adjoint partial differential operators to general domains, Fuglede \cite{Fug74} introduced the concept of spectral sets and proposed a conjecture concerning their geometric characterization.

\begin{conjecture}
[Fuglede, 1974] $\Omega$ is a spectral set if and only if $\Omega$ is a translational tile.
\end{conjecture}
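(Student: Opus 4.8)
The plan is to prove the two implications separately, exploiting the Fourier\nobreakdash-analytic reformulation that places spectrality and tiling on nearly symmetric footing. Write $f=\chi_\Omega$ and $\widehat f(\xi)=\int_\Omega e^{-2\pi i\langle\xi,x\rangle}\,dx$. A set $\Lambda$ is a spectrum of $\Omega$ exactly when $\{e^{2\pi i\langle\lambda,\cdot\rangle}\}_{\lambda\in\Lambda}$ is orthogonal and complete in $L^2(\Omega)$; orthogonality is equivalent to $(\Lambda-\Lambda)\setminus\{0\}\subseteq Z(\widehat f)$, the zero set of $\widehat f$, while completeness is equivalent to the frequency\nobreakdash-space tiling identity $\sum_{\lambda\in\Lambda}|\widehat f(\xi-\lambda)|^2=|\Omega|^2$ for a.e.\ $\xi$. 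On the other side, $\Omega$ tiles $\mathbb R^d$ by a set $T$ exactly when $\sum_{t\in T}f(x-t)=1$ a.e. First I would record this dictionary carefully, because it shows that being spectral asks $|\widehat f|^2$ to tile frequency space, while being a tile asks $f$ to tile physical space, and the whole argument aims to turn this parallelism into a genuine equivalence in both directions.

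The base case I would settle first is the lattice situation, which is unconditional and sets the template for everything else. If $\Omega$ tiles by a full\nobreakdash-rank lattice $L$, then $|\Omega|=\covol(L)$ and Poisson summation forces $\widehat f$ to vanish on $L^{*}\setminus\{0\}$; the same vanishing together with the measure normalization makes the dual lattice $L^{*}$ an orthogonal basis, so $L^{*}$ is a spectrum, and the converse runs identically. I would isolate this as a self\nobreakdash-contained lemma asserting, for every bounded $\Omega$, that $\Omega$ tiles by $L$ if and only if $L^{*}$ is a spectrum. The strategy for the general theorem is then to show that an arbitrary tiling set $T$, or an arbitrary spectrum $\Lambda$, may be replaced without loss of generality by one carrying enough group structure for this lemma to apply.

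For that replacement I would reduce the geometry first to a rational and then to a periodic model. Concretely, one argues that a bounded tile admits a complement that can be taken periodic, and that a spectral set of rational measure admits a periodic, indeed integer\nobreakdash-scaled, spectrum; rescaling transports both questions to $\mathbb Z$, and periodicity pushes them further to a finite cyclic group $\mathbb Z_n$. In $\mathbb Z_n$ each notion becomes a statement about vanishing sums of roots of unity, so the arithmetic of cyclotomic polynomials---the Coven--Meyerowitz divisibility conditions (T1) and (T2)---can be used to manufacture a spectrum out of a tiling and a tiling out of a spectrum. The final step is to propagate these finite equivalences back up through the rescalings and, via the lattice lemma, to reassemble the continuous object; carrying this lifting out in both directions is what would actually establish the biconditional rather than merely relate the two finite problems.

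The decisive obstacle I anticipate is precisely the passage from an arbitrary tiling or spectrum to one with the required periodicity. Spectra on $\mathbb R^1$ are not known a priori to be periodic, or even rational, so the descent to $\mathbb Z_n$ is not automatic, and the orthogonality/completeness conditions constrain $\Lambda$ only through the zero set $Z(\widehat{\chi_\Omega})$ rather than through any group law. The heart of the proof must therefore be a structural dichotomy showing that packing or tiling by $Z(\widehat{\chi_\Omega})$ can always be upgraded to a periodic arrangement; once that is in hand, the cyclotomic arithmetic and the lattice lemma would combine to yield the equivalence. I expect essentially all of the difficulty to concentrate in this rationality\nobreakdash-and\nobreakdash-periodicity step, with the remaining reductions being bookkeeping by comparison.
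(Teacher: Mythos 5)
You are attempting to prove a statement that the paper does not prove and that nobody can currently prove: the Fuglede conjecture is stated in the paper as a \emph{conjecture}, it remains open in dimensions one and two, and it is in fact \emph{false} in dimensions $d\geq 3$ in both directions (Tao's counterexample and its refinements, cited in the paper as \cite{Tao04,KM06,FaMaMo06,M05,KoMa06}). So any argument claiming the biconditional for general $\Omega\subset\mathbb{R}^d$ must contain an error; your frequency-space dictionary and the lattice lemma (which is Fuglede's original theorem and is correct) survive in all dimensions, so the failure must occur in your reduction-and-lifting machinery --- and indeed it does, because that lifting is precisely the mechanism by which the known counterexamples in $\mathbb{Z}_n$ are transported to $\mathbb{R}^d$, $d\geq 3$.

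The concrete gaps are exactly the two places you flag as ``obstacles'' and then assume away. First, your claim that a spectral set of rational measure admits a rational (integer-scaled, periodic) spectrum is not a known fact: periodicity of spectra on $\mathbb{R}^1$ is known (Iosevich--Kolountzakis), but rationality is open --- it is literally question \textbf{(Q1)} of the paper, and the paper's own Theorem on rational spectra runs in the opposite logical direction (it \emph{assumes} the Fuglede conjecture to deduce rationality, so invoking it here would be circular). Second, your claim that in $\mathbb{Z}_n$ the cyclotomic conditions (T1), (T2) let one ``manufacture a spectrum out of a tiling and a tiling out of a spectrum'' presupposes that tiles and spectral sets of integers satisfy the Coven--Meyerowitz property; the tile half is the open Coven--Meyerowitz conjecture, and the spectral half is question \textbf{(Q2)}, also open. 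What the paper actually establishes is conditional: Theorem \ref{th0.0} gives equivalences among the $\mathbb{Z}_n$, $\mathbb{Z}$, and $\mathbb{R}^1$ statements, and Theorem \ref{th0.2} shows that \emph{if} the CM-property holds for spectral sets and tiles in $\mathbb{Z}$, \emph{then} both directions of the conjecture on $\mathbb{R}^1$ follow. Your proposal retraces this reduction scheme but presents the unresolved hypotheses as if they were provable steps; as written it is a proof outline whose every load-bearing step is an open problem.
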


Recall that $\Omega$ is a {\it translational tile} if there exists a {\it tiling set} ${\mathcal T}$ such that $\bigcup_{t\in{\mathcal T}}(\Omega+t) = {\mathbb R}^d$ and the Lebesgue measure of $(\Omega+ t)\cap (\Omega+t')$ is zero of all $t\neq t'$ in ${\mathcal J}$. We also say that a countable set $\Lambda$ is {\it periodic} if there exists a {\it period} $a>0$ such that $\Lambda+a=\Lambda$. It is easy to see that the set of all periods ${\mathcal P}$ of $\Lambda$ form a countable subgroup of ${\mathbb R}$ and we call the set of all periods the {\it period lattice}. The {\it minimal period}, denoted by $p_{\min}$, is the smallest positive period in the period lattice. We can deduce easily that ${\mathcal P} = p_{\min}{\mathbb Z} $.

\medskip

 It was shown by Fuglede \cite{Fug74} that the conjecture is true if $\Omega$ is a fundamental domain for a lattice. He also showed that circles and triangles are not spectral. Moreover, he gave some examples of spectral tiles that are not fundamental domains. Recently, Tao \cite{Tao04} gave a counterexample to disprove the conjecture on $d\geq 5$. It was eventually shown that the conjecture is false in both directions on $d\geq 3$ \cite{KM06,FaMaMo06,M05,KoMa06}. All these counterexamples  involve the study of the Fuglede conjecture on finite abelian groups and also on the integer lattice, or they involve some counterexamples for the seemingly stronger conjectures called the {\it universal spectrum conjecture (USC)} and {\it universal tiling conjecture (UTC)} introduced in \cite{LaWa97,PeWa01}.

\medskip

It is still not known whether these techniques can be used to produce counterexamples for the Fuglede conjecture in dimensions one or two. On the other hand, there are numerous positive results indicating that the conjecture on ${\mathbb R}^1$ might be true. One of the most promising results is that any tiling sets and any spectra on ${\mathbb R}^1$ must be periodic. These results are  due to Lagarias and Wang \cite{LaWa96} (for tiling sets) and also recently by Iosevich and Kolountzakis \cite{IoKo12} (for spectra).  Motivated by these results, we believe it is useful to clarify the equivalences between various forms of the Fuglede conjecture on the groups ${\mathbb Z}_n$, ${\mathbb Z}$ and ${\mathbb R}^1$ and the corresponding universal tiling and spectrum conjectures on ${\mathbb Z}_n$ and ${\mathbb Z}$.

\medskip

\begin{definition}
Let $G$ be a locally compact abelian group and $\widehat{G}$ its dual group. A set $\Lambda\subset{\widehat{G}}$ is called a {\it spectrum} of a set $T\subset G$ if the characters $\{\lambda\}_{\lambda\in\Lambda}$ form an orthonormal basis in $L^2(T)$. $T$ is called a {\it spectral set} of $G$. $T$ is called a tile if there exists a {\it tiling set} ${\mathcal T}$ in $G$ such that $T\oplus {\mathcal T} = G$ (i.e. every element in $G$ can be uniquely written as sum of elements in $T$ and ${\mathcal T}$, up to Haar measure zero ).
\end{definition}

 Note that $\widehat{{\mathbb R}} = {\mathbb R}$, $\widehat{{\mathbb Z}} = {\mathbb T}$ and  ${\widehat{{\mathbb Z}_n}} = \frac1n{\mathbb Z}_n$. By a natural identification, we think of  ${\widehat{\bz_n}} $ as ${\mathbb Z}_n$ also. We consider the following statements for $G ={\mathbb R}, {\mathbb Z}, {\mathbb Z}_n$.

\vspace{0.2cm}

{\bf (S-T($G$))}. Every bounded spectral set in $G$ is a tile in $G$.

\vspace{0.2cm}

{\bf (T-S($G$))}. Every bounded tile in $G$ is a spectral set in $G$.

\vspace{0.2cm}

{\bf (USC($G$))}. If $\mathcal B$ is a finite family of bounded sets in $G$ with the property that there exists $A\subset G$ such that $A\oplus B=G$ for all $B\in\mathcal B$, then the sets $B\in\mathcal B$ have a common spectrum $\Gamma \subset\widehat{G}$.

\vspace{0.2cm}

{\bf (UTC($G$))}. If $\mathcal A$ is a finite family of bounded sets in $G$ with the property that there exists a common spectrum $\Gamma\subset{\widehat{G}}$,  then the sets $A\in\mathcal A$ have a common tiling set ${\mathcal T}$ so that $A\oplus {\mathcal T} = {\mathbb Z}$.

\medskip

Unless otherwise mentioned, we will write {\bf (S-T(${\mathbb Z}_n$))} to mean that spectral implies tile is true on ${\mathbb Z}_n$ {\it for all $n\in{\mathbb N}$}. The same applies for the other statements.  This helps simplify the statements. On ${\mathbb Z}$, we also consider the following stronger statements.

\vspace{0.2cm}

 {\bf (Strong S-T(${\mathbb Z}$))}. For every finite union of intervals $\Omega = A+[0,1]$ with $A\subset {\mathbb Z}$,  if $\Lambda$ is a spectrum of $\Omega$ with minimal period $\frac{1}{N}$, then $\Omega$ tiles ${\mathbb R}$ with a tiling set ${\mathcal T}\subset N{\mathbb Z}$.

\vspace{0.2cm}

{\bf (Strong T-S(${\mathbb Z}$)) }. For every finite union of intervals $\Omega = A+[0,1]$ with $A\subset {\mathbb Z}$,  if $\Omega$ tiles ${\mathbb R}$ with a tiling set ${\mathcal T}\subset k{\mathbb Z}$ for some integer $k>0$, then $\Omega$ admits a spectrum $\Lambda$ with period $\frac1k$.

\medskip

We will consider the equivalence of the statements above with $G = {\mathbb R},{\mathbb Z}$ and ${\mathbb Z}_n$. Results of this type are scattered in the literature \cite{LaWa97,PeWa01,LS01,M05,FaMaMo06,KoMa06,DuJo12b}. We gather them and some new results in the following theorem to describe the complete picture.

\begin{theorem}\label{th0.0}

\vspace{0.2cm}

(i)   {\bf  (T-S(${\mathbb Z}_n$))} $\Longleftrightarrow$ {\bf (USC(${\mathbb Z}_n$))}  $\Longleftrightarrow$  {\bf (T-S(${\mathbb R}$))} $\Longleftrightarrow$  {\bf (T-S(${\mathbb Z}$))}.

\vspace{0.2cm}

(ii)   {\bf (S-T(${\mathbb Z}_n$))}  $\Longleftrightarrow$ {\bf (UTC(${\mathbb Z}_n$))},

\vspace{0.1cm}

 \ \ \ \ \  {\bf (S-T(${\mathbb R}$))} $\Longrightarrow${\bf (S-T(${\mathbb Z}$))} $\Longrightarrow$ {\bf (S-T(${\mathbb Z}_n$))}. If  any spectral set $\Omega$ of Lebesgue measure 1 has a rational spectrum, then the converses hold.

\vspace{0.2cm}

(iii)  {\bf (Strong T-S(${\mathbb Z}$))} $\Longleftrightarrow$ {\bf (USC(${\mathbb Z}$))} $\Longleftrightarrow$  {\bf (T-S(${\mathbb R}$))}.

\vspace{0.2cm}

(iv)  {\bf (Strong S-T(${\mathbb Z}$))} $\Longleftrightarrow$ {\bf (UTC(${\mathbb Z}$))} $\Longleftrightarrow$  {\bf (S-T(${\mathbb R}$))}.
\end{theorem}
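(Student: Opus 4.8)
The engine of the whole equivalence will be a single structural dictionary between unions of fattened lattice sets in $\mathbb{R}$ and finite families in $\mathbb{Z}$. Given a measurable partition $[0,1)=\bigsqcup_{\alpha=1}^{r}E_\alpha$ into sets of positive measure and finite sets $D_1,\dots,D_r\subset\mathbb{Z}$ with $|D_\alpha|=n$ for all $\alpha$, put $\Omega=\bigsqcup_{\alpha=1}^{r}(E_\alpha+D_\alpha)$. Writing $\widehat{\chi_\Omega}(\xi)=\sum_\alpha \widehat{\chi_{E_\alpha}}(\xi)\,\widehat{D_\alpha}(\xi)$ with $\widehat{D_\alpha}(\xi)=\sum_{d\in D_\alpha}e^{-2\pi i d\xi}$, and using that $\widehat{D_\alpha}$ is $1$-periodic while $\widehat{\chi_{E_\alpha}}(\beta+\ell)$ is the $\ell$-th Fourier coefficient of $\chi_{E_\alpha}(t)e^{-2\pi i\beta t}$ on $[0,1)$, the linear independence of the $\chi_{E_\alpha}$ shows that, for $F\subset[0,1)$ with $|F|=n$, the set $F+\mathbb{Z}$ is a spectrum of $\Omega$ if and only if $F$ is a common spectrum of $D_1,\dots,D_r$ (completeness coming from the density count $|F+\mathbb{Z}|=n=|\Omega|$); likewise $\mathcal T\subset\mathbb{Z}$ satisfies $\Omega\oplus\mathcal T=\mathbb{R}$ if and only if $D_\alpha\oplus\mathcal T=\mathbb{Z}$ for every $\alpha$. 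This \emph{fiber lemma} is what converts universal statements on $\mathbb{Z}$ into statements on $\mathbb{R}$ and back. With it in hand the plan is to make UTC($\mathbb{Z}$) the hub: I would prove the clean implications UTC($\mathbb{Z}$)$\Rightarrow$S-T($\mathbb{R}$) and UTC($\mathbb{Z}$)$\Leftrightarrow$Strong S-T($\mathbb{Z}$) first, and then close the cycle with the delicate implication S-T($\mathbb{R}$)$\Rightarrow$UTC($\mathbb{Z}$).

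For UTC($\mathbb{Z}$)$\Rightarrow$S-T($\mathbb{R}$), I start from a bounded spectral $\Omega\subset\mathbb{R}$ with spectrum $\Lambda$. By Iosevich--Kolountzakis $\Lambda$ is periodic; rescaling so that its minimal period is $1$, I may assume $\Lambda=F+\mathbb{Z}$ with $F\subset[0,1)$ and $|F|=|\Omega|=:n\in\mathbb{N}$. Orthogonality of $\mathbb{Z}\subset\Lambda-\Lambda$ forces $\widehat{\chi_\Omega}(\ell)=0$ for $\ell\neq 0$, i.e.\ $\sum_{k}\chi_\Omega(\cdot+k)\equiv n$, so for a.e.\ $x$ the fiber $D(x)=\{k\in\mathbb{Z}:x+k\in\Omega\}$ has exactly $n$ elements; since $\Omega$ is bounded, $D$ takes only finitely many values $D_1,\dots,D_r$ on measurable sets $E_1,\dots,E_r$ partitioning $[0,1)$, and $\Omega=\bigsqcup_\alpha(E_\alpha+D_\alpha)$. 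The fiber lemma gives that $F$ is a common spectrum of the $D_\alpha$; UTC($\mathbb{Z}$) then yields a common tiling complement $\mathcal T\subset\mathbb{Z}$, and the fiber lemma returns $\Omega\oplus\mathcal T=\mathbb{R}$. The implication UTC($\mathbb{Z}$)$\Rightarrow$Strong S-T($\mathbb{Z}$) is the same argument applied to $\tfrac1N\Omega$ when $\Omega=A+[0,1)$ has a spectrum of minimal period $\tfrac1N$: the resulting complement lies in $\mathbb{Z}$ for $\tfrac1N\Omega$, hence in $N\mathbb{Z}$ for $\Omega$.

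For Strong S-T($\mathbb{Z}$)$\Rightarrow$UTC($\mathbb{Z}$), I encode a family $A_1,\dots,A_r$ with common spectrum $\Gamma$ as a single set: take $E_\alpha=[\tfrac{\alpha-1}{r},\tfrac{\alpha}{r})$ and dilate $\bigsqcup_\alpha(E_\alpha+A_\alpha)$ by $r$ to obtain $\widetilde\Omega=\widetilde A+[0,1)$ with $\widetilde A=\bigsqcup_\alpha\big(rA_\alpha+(\alpha-1)\big)\subset\mathbb{Z}$. By the fiber lemma $\widetilde\Omega$ is spectral with a spectrum whose minimal period is $\tfrac1N$ for some $N$ divisible by $r$; Strong S-T($\mathbb{Z}$) then produces a tiling complement $S\subset N\mathbb{Z}\subseteq r\mathbb{Z}$, and $\mathcal T=\tfrac1r S\subset\mathbb{Z}$ is a complement of $\bigsqcup_\alpha(E_\alpha+A_\alpha)$, i.e.\ a common tiling complement $A_\alpha\oplus\mathcal T=\mathbb{Z}$. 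It is precisely the strengthened conclusion $\mathcal T\subset N\mathbb{Z}$ that prevents the residues modulo $r$ (the different layers $E_\alpha$) from being mixed, so the weaker S-T($\mathbb{R}$) cannot be substituted here.

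The remaining implication S-T($\mathbb{R}$)$\Rightarrow$UTC($\mathbb{Z}$) is the main obstacle, and the previous paragraph explains why: encoding the family with \emph{interval} fibers $E_\alpha$ produces a set that S-T($\mathbb{R}$) only guarantees to tile by \emph{some} complement, and such a complement may live on the fine lattice $\tfrac1r\mathbb{Z}$ and mix the layers, yielding merely that $\widetilde A$ tiles $\mathbb{Z}$ rather than a common complement for the $A_\alpha$. To force integrality one must instead choose the fibers $E_\alpha$ so as to break every fractional-translation symmetry of $\Omega=\bigsqcup_\alpha(E_\alpha+A_\alpha)$: the goal is a partition for which every tiling complement is necessarily contained in $\mathbb{Z}$, after which the fiber lemma decodes the tiling of the spectral set $\Omega$ (supplied by S-T($\mathbb{R}$)) into the desired common complement. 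Establishing this rigidity---that a suitably generic choice of $E_\alpha$ admits no complement with nonzero fractional parts, using the Lagarias--Wang periodicity of complements of a bounded tile---is the crux. The alternative route, reducing to S-T($\mathbb{Z}_n$)$\Leftrightarrow$UTC($\mathbb{Z}_n$) from part (ii), is clean only when the common spectrum $\Gamma$ can be taken rational, so the genuinely hard case is that of an irrational common spectrum.
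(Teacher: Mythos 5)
There is a genuine gap, on two levels. First, your proposal addresses only part (iv) of the four-part theorem. Parts (i) and (iii) concern the tile-to-spectral direction (\textbf{T-S($\mathbb{Z}_n$)}, \textbf{USC}, \textbf{Strong T-S($\mathbb{Z}$)}, \textbf{T-S($\mathbb{R}$)}) and part (ii) the finite-group statements \textbf{S-T($\mathbb{Z}_n$)} $\Leftrightarrow$ \textbf{UTC($\mathbb{Z}_n$)} together with the chain \textbf{S-T($\mathbb{R}$)} $\Rightarrow$ \textbf{S-T($\mathbb{Z}$)} $\Rightarrow$ \textbf{S-T($\mathbb{Z}_n$)}; none of these appear in your write-up. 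The paper proves (i) and (ii) in Theorem \ref{th5.2} using the Lagarias--Wang structure theorem for tilings (Theorem \ref{th2.1}) and the Kolountzakis--Matolcsi/Tao transference result (Theorem \ref{th5.1}), and proves (iii) in Theorem \ref{th1.6}, where the implication \textbf{USC($\mathbb{Z}$)} $\Rightarrow$ \textbf{Strong T-S($\mathbb{Z}$)} rests on the Coven--Meyerowitz decomposition $A(x)=\sum_i x^{a_i}A_i(x^k)$ (Theorem \ref{th2.3}) and an explicit construction of a spectrum of period $\frac1k$. These are not routine consequences of your fiber lemma: the tile side is easier than the spectral side precisely because tiling complements are automatically periodic and rational, an asymmetry your proposal notes but does not exploit.

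Second, even within part (iv), you prove \textbf{UTC($\mathbb{Z}$)} $\Rightarrow$ \textbf{S-T($\mathbb{R}$)} and \textbf{UTC($\mathbb{Z}$)} $\Leftrightarrow$ \textbf{Strong S-T($\mathbb{Z}$)} (these are correct and parallel the paper's use of Proposition \ref{prop2.3}, modulo the completeness-from-density step in your fiber lemma, which deserves a citation or argument), but you explicitly leave the implication \textbf{S-T($\mathbb{R}$)} $\Rightarrow$ \textbf{UTC($\mathbb{Z}$)} as an unestablished ``crux.'' Identifying the difficulty is not the same as resolving it, and without this implication the cycle of equivalences in (iv) does not close. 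The paper handles this by quoting the result of \cite{DuJo12b} that \textbf{Strong S-T($\mathbb{Z}$)} $\Leftrightarrow$ \textbf{UTC($p$)} for all $p$ $\Leftrightarrow$ \textbf{S-T($\mathbb{R}$)}, and then only needs the elementary Proposition \ref{prop3.1} reconciling \textbf{UTC($p$)} with \textbf{UTC($\mathbb{Z}$)}. The rigidity mechanism you gesture at --- choosing the fibers $E_\alpha$ with mutually irrational endpoints so that no tiling complement (or spectrum) can mix the layers --- is indeed the right idea; it is carried out in \cite{DuJo12b} (and in the analogous argument for the spectral side one needs the spectral unitary matrix to become block-structured under an irrational partition). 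But as written your proposal contains no proof of this step, so the hardest implication of part (iv) is missing.
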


\begin{remark}
  It is not known whether the statements {\bf (S-T(${\mathbb R}$))} is also equivalent to  {\bf (S-T(${\mathbb Z}$))}. For the statements of {\bf (Strong T-S(${\mathbb Z}$))} or {\bf (Strong S-T(${\mathbb Z}$))}, one should be careful with the additional requirements on the tiling sets and on the period of the spectra. From Proposition \ref{prop2.4}, the statement for $k=1$ is always true. On the other hand, if we can find an integer tile $A+[0,1)$ such that it tiles with a subset of $k{\mathbb Z}$ with $k>1$, but there is no spectrum of the correct period, then Fuglede conjecture will be disproved.
\end{remark}

As we see  in Theorem \ref{th0.0}(ii), the rationality of the spectra is the property that is breaking the symmetry between the statements for the two sides of the duality: we do not know if all spectral sets of rational measure must have a rational spectrum. However, we will prove that rationality of the spectrum is a consequence of the Fuglede conjecture.

\begin{theorem}
Suppose the Fuglede conjecture is true (i.e. both {\bf (S-T(${\mathbb R}$))} and {\bf (T-S(${\mathbb R}$))} holds), then any bounded spectral sets $\Omega$ of $|\Omega|=1$ will have a rational spectrum.
\end{theorem}

In fact, using the same reasoning, we can also show that the Fuglede conjecture implies every bounded tile of measure 1 will have a rational tiling set, which is known to be true in \cite{LaWa96}.


\medskip

The above equivalences infer us that the Fuglede problem is related to the factorization of the abelian groups ${\mathbb Z}_n$. Lagarias and Wang \cite{LaWa97} showed that the {\it Tijdeman conjecture} implies {\bf (T-S(${\mathbb R}$))}. However, the Tijdeman conjecture was proved to be false (see \cite{CoMe99} and \cite{LS01}) using a construction of integer tiles by \cite{S85}. At the same time, Coven and Meyerowitz introduced two algebraic properties on the finite sets $A\subset \bz^{+}\cup\{0\}$. Define {\it the mask polynomial} associated to $A$,
$$
A(x) := \sum_{a\in A}x^a.
$$
Recall that the cyclotomic polynomial $\Phi_s(x)$ is the minimal polynomial for the primitive $s^{th}$ root of unity.


\medskip

\begin{definition}\label{def0.1}
Let $A$ be a finite subset of $\bz^{+}\cup\{0\}$ and let
$${\mathcal S}_A = \{p^{\alpha}:  p\ \mbox{is a prime}, \alpha\geq 1 \ \mbox{an integer} \ \mbox{and} \  \Phi_s(x) \  \mbox{divides} \ A(x) \}.$$
We say that $A$ (or $A(x)$) satisfies the Coven-Meyerowitz property (CM-property) if $A(x)$ satisfies

\vspace{0.2cm}

$(T1)$. $\#A = A(1) = \prod_{s\in {\mathcal S}_A}\Phi_s(1)$.

\vspace{0.2cm}

$(T2)$. If $s_1,\cdots, s_n\in {\mathcal S}_A$, then $\Phi_{s_1\cdots s_n}(x)$ divides $A(x)$.
\end{definition}

\medskip

  Coven-Meyerowitz showed that tiles on ${\mathbb Z}$ must satisfy (T1) and they satisfy (T2) if the number of elements in the tiles contains at most 2 prime factors. They conjectured that all integer tiles must satisfy the CM-property. It is now known that Coven-Meyerowitz conjecture is strictly weaker than the Tijdeman conjecture \cite{LS01}. Moreover, the work of Coven and Meyerowitz \cite{CoMe99} and {\Laba}aba \cite{Lab02} tell us that:

\begin{theorem} [Coven-Meyerowitz, {\Laba}aba]

(i)\cite{CoMe99} If $A\subset{\mathbb Z}^{+}\cup\{0\}$ satisfies the CM-property, then $A$ is a tile of integers and $A+[0,1]$ is a tile of ${\mathbb R}$ with tiling set ${\mathcal T}_{CM}$.

\vspace{0.2cm}

 (ii)\cite{Lab02} If $A\subset{\mathbb Z}^{+}\cup\{0\}$ satisfies the CM-property, then $A$ is a spectral set of integers, and $A+[0,1]$ is a spectral set of ${\mathbb R}$ with spectrum $\Lambda_{{\Laba}}$.
\end{theorem}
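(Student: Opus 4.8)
The plan is to treat the two parts separately, reconstructing an explicit complement on the tiling side and an explicit spectrum on the spectral side; in both cases the core is a statement in a suitable cyclic group $\mathbb{Z}_N$ that is afterwards lifted to $\mathbb{Z}$ and then to $\mathbb{R}$.

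\emph{Part (i), tiling.} Since a finite set that tiles some $\mathbb{Z}_N$ automatically tiles $\mathbb{Z}$ (take the complement together with $N\mathbb{Z}$), and a tiling of $\mathbb{Z}$ by $A$ yields a tiling of $\mathbb{R}$ by $A+[0,1]$ (each integer, hence each translated unit interval, is covered exactly once), it suffices to produce a $0$-$1$ complement $B$ with $A\oplus B=\mathbb{Z}_N$ for some $N$. I would use the polynomial criterion: writing $\frac{x^N-1}{x-1}=\prod_{1<d\mid N}\Phi_d(x)$, the relation $A\oplus B=\mathbb{Z}_N$ is equivalent to $A(1)B(1)=N$ together with $A(x)B(x)\equiv\frac{x^N-1}{x-1}\pmod{x^N-1}$; as each $\Phi_d$ is irreducible this amounts to assigning every cyclotomic factor $\Phi_d$ ($1<d\mid N$) to exactly one of $A$, $B$. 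I would then realize $B$ as a product of ``ruler'' polynomials $1+x^{c}+\cdots+x^{(p-1)c}=\frac{x^{pc}-1}{x^c-1}$, one batch for each prime power $p^\beta\mid N$ not lying in $\mathcal{S}_A$, with geometrically increasing strides $c$ chosen by a mixed-radix scheme.

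\emph{Part (ii), spectral.} First I would compute the orthogonality relation on $\Omega=A+[0,1]$: for frequencies $\lambda,\lambda'$ one finds $\langle e_\lambda,e_{\lambda'}\rangle_{L^2(\Omega)}=\frac{e^{2\pi i\xi}-1}{2\pi i\xi}A(e^{2\pi i\xi})$ with $\xi=\lambda-\lambda'$, so $e_\lambda\perp e_{\lambda'}$ iff $\xi\in\mathbb{Z}\setminus\{0\}$ or $A(e^{2\pi i\xi})=0$, the latter occurring exactly when $e^{2\pi i\xi}$ is a root of some cyclotomic factor $\Phi_d\mid A$. Next I would build the finite ``fractional'' spectrum $\Lambda_0=\bigoplus_{p^\alpha\in\mathcal{S}_A}\{0,\tfrac1{p^\alpha},\ldots,\tfrac{p-1}{p^\alpha}\}$ and set $\Lambda_{\Laba}=\Lambda_0+\mathbb{Z}$. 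By $(T1)$, $|\Lambda_0|=\prod_{s\in\mathcal{S}_A}\Phi_s(1)=|A|$, matching the measure of $\Omega$. Orthogonality of distinct elements of $\Lambda_0$ follows because any difference reduces, by the Chinese Remainder Theorem across distinct primes and by a $p$-adic valuation computation within a single prime, to a \emph{primitive} root of unity of order $d=\prod_p p^{\alpha'}$, a product of prime powers each belonging to $\mathcal{S}_A$; condition $(T2)$ then guarantees $\Phi_d\mid A$, so $A$ vanishes there. Adding $\mathbb{Z}$ preserves orthogonality through the first alternative above. For completeness I would invoke the frequency-tiling (Landau-density) criterion: the orthonormal exponential family has Beurling density equal to $|\Omega|$, and the condition $\sum_{\lambda\in\Lambda_{\Laba}}|\widehat{1_\Omega}(\xi-\lambda)|^2\equiv|\Omega|$ factors, reducing to the finite fact that orthonormality plus the cardinality $|\Lambda_0|=|A|$ forces $\Lambda_0$ to be a basis.

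The main obstacle in (i) is verifying that the product of ruler polynomials is genuinely a $0$-$1$ polynomial, i.e.\ that it expands with no carries so that $B$ is an honest set, and that the cyclotomic factors it contributes are precisely those not already forced into $A$: here $(T1)$ fixes the cardinalities and $(T2)$ guarantees that every composite cyclotomic factor required of $A$ is actually present, leaving $B$ to supply only prime-power-structured factors realizable by rulers. In (ii) the analogous crux is the reduction of an arbitrary difference in $\Lambda_0$ to a primitive root of a single $\Phi_d$ with $d$ a product of members of $\mathcal{S}_A$ — exactly the step where $(T2)$ is indispensable — together with the density argument that promotes the orthonormal family to a complete one.
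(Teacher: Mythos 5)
The paper does not prove this theorem: it is quoted verbatim from \cite{CoMe99} and \cite{Lab02}, and the paper only records the resulting explicit constructions $\mathcal T_{CM}$ and $\Lambda_{\Laba}$ in Definitions \ref{def1.1} and \ref{def1.2}. So there is no internal proof to compare against; your sketch is, in substance, a reconstruction of the original Coven--Meyerowitz and \L aba arguments, and the objects you build (the product of ruler polynomials $\Phi_s(x^{t(s)})$ over prime powers $s\mid M$ with $s\notin\mathcal S_A$, and the fractional set $\bigoplus_{p^\alpha\in\mathcal S_A}\{0,\tfrac1{p^\alpha},\dots,\tfrac{p-1}{p^\alpha}\}+\bz$) are exactly the $\mathcal T_{CM}$ and $\Lambda_{\Laba}$ named in the statement. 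The skeleton is sound: the passage $\bz_N\to\bz\to\br$ on the tiling side, the factorization $\widehat{\chi_\Omega}(\xi)=A(e^{2\pi i\xi})\widehat{\chi_{[0,1]}}(\xi)$ on the spectral side, and the use of $(T2)$ to show that every difference in $\Gamma_{\Laba}$ lands on a primitive $d$-th root of unity with $d$ a product of members of $\mathcal S_A$ (this valuation computation is essentially Lemma \ref{lem1.1} of the paper).

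Two remarks on the points you flag as obstacles. First, the ``no carries'' verification in part (i) is not actually needed and would be unpleasant to do directly: once you know $A(x)B(x)$ has nonnegative coefficients, $A(1)B(1)=N$ (which is where $(T1)$ enters, via $\Phi_{p^k}(1)=p$), and $\Phi_d\mid A(x)B(x)$ for every $d\mid N$, $d>1$, it follows that $A(x)B(x)\equiv 1+x+\dots+x^{N-1}\pmod{x^N-1}$, and since folding exponents modulo $N$ preserves nonnegativity, every coefficient is forced to equal $1$; this simultaneously shows that $B$ is an honest set modulo $N$ and that the sum is direct. (Also, irreducibility only requires each $\Phi_d$ to divide \emph{at least} one of $A$, $B$, not exactly one.) Second, in part (ii) the Beurling/Landau density remark by itself proves nothing about completeness; what carries the argument is the Parseval identity you mention, which factors as $\sum_{\lambda\in\Lambda_{\Laba}}|\widehat{\chi_\Omega}(\xi-\lambda)|^2=\sum_{\gamma\in\Gamma_{\Laba}}|A(e^{2\pi i(\xi-\gamma)})|^2$ using $\sum_{n\in\bz}|\widehat{\chi_{[0,1]}}(\xi-n)|^2=1$, reducing completeness to the finite statement that $|\Gamma_{\Laba}|=\#A$ orthogonal vectors in $\bc^{\#A}$ form a basis --- i.e.\ to Proposition \ref{prop2.4} of the paper. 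With these substitutions your outline closes up into a complete proof.
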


Due to the explicit construction of the tiling set ${\mathcal T}_{CM}$ and of the spectrum $\Lambda_{{\Laba}}$ under the CM assumption (see Definition \ref{def1.1} and \ref{def1.2}), using the above theorems, we can show that the CM-property implies both sides of the Fuglede conjecture.

\medskip

\begin{theorem}\label{th0.2}

(i) If every spectral set $A\subset\bz^{+}\cup \{0\}$ satisfies the CM-property, then every bounded spectral subset of $\br$ tiles by translations.

\vspace{0.2cm}

(ii) If every tile $A\subset\bz^{+}\cup \{0\}$ satisfies the CM-property, then every bounded tile  of $\br$ is a spectral set.
\end{theorem}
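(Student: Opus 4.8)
The plan is to reduce each statement about $\mathbb{R}$ to its integer counterpart by means of Theorem~\ref{th0.0}, and then to combine the standing CM hypothesis with the Coven--Meyerowitz--\L aba theorem to produce the required object. The passage between $\mathbb{R}$ and $\mathbb{Z}$ for a set $\Omega=A+[0,1]$ with $A\subset\mathbb{Z}^{+}\cup\{0\}$ is organized around the factorization $\widehat{\chi_{\Omega}}(\xi)=\widehat{\chi_{[0,1]}}(\xi)\,A(e^{-2\pi i\xi})$, together with the fact that $\widehat{\chi_{[0,1]}}$ vanishes precisely on $\mathbb{Z}\setminus\{0\}$.

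I would dispose of (ii) first, since on the tile side there is no rationality gap. By Theorem~\ref{th0.0}(i) the statements \textbf{(T-S($\mathbb{Z}$))} and \textbf{(T-S($\mathbb{R}$))} are equivalent, so it is enough to prove \textbf{(T-S($\mathbb{Z}$))}. Let $A\subset\mathbb{Z}^{+}\cup\{0\}$ be a tile of $\mathbb{Z}$. By the hypothesis of (ii) the set $A$ satisfies the CM-property, and then part (ii) of the Coven--Meyerowitz--\L aba theorem furnishes a spectrum for $A$ (and a spectrum for $A+[0,1]$ in $\mathbb{R}$). Thus every bounded tile of $\mathbb{Z}$ is spectral, which is exactly \textbf{(T-S($\mathbb{Z}$))}; the equivalence then yields \textbf{(T-S($\mathbb{R}$))}.

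For (i) the same shortcut is unavailable, because \textbf{(S-T($\mathbb{Z}$))}$\Rightarrow$\textbf{(S-T($\mathbb{R}$))} is not known in the absence of rational spectra. Instead I would use Theorem~\ref{th0.0}(iv), which identifies \textbf{(S-T($\mathbb{R}$))} with \textbf{(Strong S-T($\mathbb{Z}$))}, and prove the latter. So suppose $\Omega=A+[0,1]$, $A\subset\mathbb{Z}^{+}\cup\{0\}$, has a spectrum $\Lambda$ of minimal period $1/N$; I must produce a tiling complement $\mathcal{T}\subset N\mathbb{Z}$. Since $\Lambda+\tfrac1N=\Lambda$, orthogonality forces $\widehat{\chi_{\Omega}}(j/N)=0$ for all $j\in\mathbb{Z}\setminus N\mathbb{Z}$, and as $\widehat{\chi_{[0,1]}}(j/N)\neq0$ for such $j$ this gives $A(e^{-2\pi i j/N})=0$; hence $A(x)$ vanishes at every nontrivial $N$-th root of unity, i.e.\ $\tfrac{x^{N}-1}{x-1}=\prod_{1<d\mid N}\Phi_{d}(x)$ divides $A(x)$. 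Two consequences follow: every prime power $p^{a}\mid N$ lies in $\mathcal{S}_{A}$, and $A$ is equidistributed modulo $N$ (each residue class is met exactly $\#A/N$ times). Moreover $A$ is now a spectral set of $\mathbb{Z}$, so by the hypothesis of (i) it satisfies the CM-property, and part (i) of the Coven--Meyerowitz--\L aba theorem shows $A+[0,1]$ tiles $\mathbb{R}$ with the explicit complement $\mathcal{T}_{CM}$ of Definition~\ref{def1.1}.

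The crux is to arrange this complement inside $N\mathbb{Z}$. Writing $\mathcal{T}=N\mathcal{T}'$, the requirement $A\oplus\mathcal{T}=\mathbb{Z}$ is equivalent to the $N$ residue-fibers $A'_{r}=\{(a-r)/N:\,a\in A,\ a\equiv r \pmod N\}$, $0\le r<N$, possessing a common tiling complement $\mathcal{T}'$ in $\mathbb{Z}$; this common-complement phenomenon is exactly the \textbf{(UTC($\mathbb{Z}$))} content behind Theorem~\ref{th0.0}(iv). The plan is to read off such a $\mathcal{T}'$ from the Coven--Meyerowitz complement, exploiting that every prime power dividing $N$ already lies in $\mathcal{S}_{A}$ (so, by (T2), $\Phi_{d}\mid A(x)$ for all $d\mid N$ with $d>1$), which should free the complement of any dependence on the residues modulo $N$ and allow it to be taken on the sublattice $N\mathbb{Z}$. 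Verifying this last point from the explicit description in Definition~\ref{def1.1}---equivalently, checking that the fibers $A'_{r}$ are simultaneously tiled by the complement the construction produces---is the main obstacle, and is precisely where the period hypothesis $1/N$ enters in an essential way.
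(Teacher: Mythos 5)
Your part (ii) is correct but routes differently from the paper. You verify \textbf{(T-S($\mathbb{Z}$))} directly from the CM hypothesis via \L aba's theorem and then invoke the equivalence \textbf{(T-S($\mathbb{Z}$))} $\Leftrightarrow$ \textbf{(T-S($\mathbb{R}$))} from Theorem \ref{th5.2}(i); the paper instead verifies \textbf{(USC($\mathbb{Z}$))} (statement (iii) of Theorem \ref{th1.6}), by observing that if $A\oplus B=\bz_L$ for all $B\in\mathcal B$ then $A(x)B(x)=\bigl(\prod_{d|L}\Phi_d(x)\bigr)Q(x)$ with $Q(1)=1$ forces $\mathcal S_B=\mathcal S_{\mathbb N_L}\setminus\mathcal S_A$ to be the same for every $B$, so the \L aba spectrum $\Gamma_\Laba$ (which depends only on $\mathcal S_B$) is a \emph{common} spectrum. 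Both are legitimate given that the full circle of equivalences in Theorem \ref{th5.2}(i)/Theorem \ref{th1.6} is established; your version is shorter, the paper's has the extra payoff of exhibiting a universal spectrum explicitly.

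Part (i) follows the paper's strategy (reduce to \textbf{(Strong S-T($\mathbb{Z}$))} via Theorem \ref{th0.0}(iv), deduce from the minimal period $1/N$ that $\Phi_{p^a}\mid A(x)$ for every prime power $p^a\mid N$, then tile by $\mathcal T_{CM}$), but it stops exactly at the decisive point: you never prove that the Coven--Meyerowitz complement can be taken inside $N\bz$, and you explicitly label this ``the main obstacle.'' That step is the real content of the theorem and cannot be waved through; moreover the detour you sketch through residue fibers $A'_r$ and a common complement is unnecessary. The paper closes the gap with two short lemmas. Proposition \ref{lem1.4} packages your cyclotomic observation as: $N$ divides $p_\Laba=\prod_j p_j^{\alpha_j}$, where $\alpha_j$ is the largest $i$ with $p_j,\dots,p_j^i\in\mathcal S_A$. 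Lemma \ref{lem1.3} then shows $\mathcal T_{CM}=B\oplus M\bz\subset p_\Laba\bz$ directly from Definition \ref{def1.2}: each factor of $B(x)$ is $\Phi_s(x^{t(s)})=1+x^{p_j^{k-1}t(s)}+\dots+x^{(p_j-1)p_j^{k-1}t(s)}$ for a prime power $s=p_j^k\mid M$ with $s\notin\mathcal S_A$, so $k>\alpha_j$, hence $p_j^{\alpha_j}\mid p_j^{k-1}t(s)$, while $t(s)$ carries every other prime $p_l$ to its full power $r_l\ge\alpha_l$; thus every exponent occurring in $B(x)$ is divisible by $p_\Laba$, and $M$ is too. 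Since $N\mid p_\Laba$ gives $p_\Laba\bz\subset N\bz$, one gets $\mathcal T_{CM}\subset N\bz$ as required. Without an argument of this kind your proof of (i) is incomplete.
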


 Although the second part of the theorem is probably known to some authors, we have not found a proof in the literature so we provide here a complete one.

\medskip

Having these results, we propose the following questions which seem to be essential for solving the Fuglede conjecture on ${\mathbb R}$.

{\bf (Q1).} Is it true that every spectral set of  Lebesgue measure 1 has some rational spectrum?

\medskip

{\bf (Q2).} Do spectral sets and tiles on ${\mathbb Z}$ satisfy the Coven-Meyerowitz property?

\medskip

We strongly believe that the first question has a positive answer. For the second question, the tile part is exactly the Coven-Meyerowitz conjecture. On the spectral side, one needs to answer (Q1) in order to give some positive answers, since irrational spectra can never give any cyclotomic polynomial factors for $A(x)$.

\medskip

 We organize our paper as follows. In Section 2, we collect some basic known results about the geometric structure of tiles and spectral sets on ${\mathbb R}^1$. In Section 3, we study in detail the equivalences between the mentioned statements and prove Theorem \ref{th0.0} in Theorem \ref{th5.2}, Proposition \ref{prop3.1} and Theorem \ref{th1.6}. In Section 4, we discuss the Coven-Meyerowitz property and prove Theorem \ref{th0.2}.

\medskip

\medskip
\section{Preliminaries}
In this section, we collect some of the basic known results concerning the geometric structure of tiles and spectral sets that will be used throughout the paper.
First, one dimensional translational tiles were completely characterized by Lagarias and Wang \cite{LaWa96}, we summarize this as follows.

\begin{theorem}\label{th2.1}
Let $T$ be a bounded measurable translational tile on ${\mathbb R}$ with tiling set ${\mathcal T}$. Then the tiling set ${\mathcal T}$ must be periodic with period an integral multiple of $|T|$ and ${\mathcal T}\subset |T| \cdot{\mathbb Q}$.

Moreover, if the period lattice contains ${\mathbb Z}$, then we have the following decomposition of $T$ and ${\mathcal T}$: there exists a positive integer $L$ and a finite collection of finite subsets of integers ${\mathcal B}$   such that
$$
T = \bigcup_{B\in{\mathcal B}}(T_B+\frac{1}{L} B); \ {\mathcal T} = \frac{A}{L}+{\mathbb Z}
$$
where (i) $A\oplus B = {\mathbb Z}_L$ for all $B\in{\mathcal B}$, (ii) $\bigcup_{B\in{\mathcal B}} T_B = [0,1/L)$ with disjoint union.
\end{theorem}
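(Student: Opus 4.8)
The plan is to treat the two assertions separately: first the global structure of the tiling set ${\mathcal T}$ (periodicity, integrality of the period, rationality), and then, under the extra hypothesis ${\mathbb Z}\subseteq{\mathcal P}$, the fibered decomposition of $T$. Rescaling ${\mathbb R}$ by $1/|T|$ (so that $T,{\mathcal T}$ become $|T|^{-1}T,|T|^{-1}{\mathcal T}$) I may assume $|T|=1$, turning ``period an integral multiple of $|T|$'' into ``period a positive integer'' and ``${\mathcal T}\subseteq|T|{\mathbb Q}$'' into ``${\mathcal T}\subseteq{\mathbb Q}$''. I would first record two soft consequences of boundedness: writing $T\subseteq[0,R)$, a point $x$ can be covered only by a translate $T+t$ with $t\in(x-R,x]$, so ${\mathcal T}$ is relatively dense with gaps at most $R$; and packing (disjointness up to measure zero) forces ${\mathcal T}$ to have uniform density $1/|T|=1$.

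For the global structure I would pass to the Fourier side. With $\nu=\sum_{t\in{\mathcal T}}\delta_t$, the tiling identity $\chi_T*\nu=1$ a.e. reads $\widehat{\chi_T}\cdot\widehat\nu=\delta_0$ as tempered distributions; since $\chi_T$ has compact support, $\widehat{\chi_T}$ is entire of exponential type (Paley--Wiener) with $\widehat{\chi_T}(0)=|T|=1\neq0$, so $\widehat\nu$ is forced to live on the discrete set $\{0\}\cup\{\xi:\widehat{\chi_T}(\xi)=0\}$. Upgrading this, together with the translation-boundedness of $\nu$, to the assertion that $\mathrm{supp}\,\widehat\nu$ lies in a rational lattice $\tfrac1L{\mathbb Z}$ --- equivalently, that ${\mathcal T}$ is periodic and ${\mathcal T}\subseteq{\mathbb Q}$ --- is the analytic core. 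Granting it, a measure count in one minimal period shows that the folded tiling fills $[0,p_{\min})$ with exactly $N$ disjoint measure-one translates of $T$, so $p_{\min}=N=N|T|$ is a positive integer. (Once rationality is in hand, periodicity can also be recovered cheaply by folding at scale $\tfrac1L$ to a tiling of ${\mathbb Z}$ by a finite set against the fixed complement $L{\mathcal T}$ of uniformly bounded diameter, and invoking Newman's pigeonhole theorem for integer tilings.)

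For the decomposition I assume ${\mathbb Z}\subseteq{\mathcal P}$, so ${\mathcal T}$ is ${\mathbb Z}$-periodic; since $1\in{\mathcal P}$ is an integral multiple of $|T|$ by the first part, $|T|$ is rational, and ${\mathcal T}\subseteq{\mathbb Q}$ together with ${\mathbb Z}$-periodicity lets me choose a common denominator $L$ with ${\mathcal T}=\tfrac1L A+{\mathbb Z}$, $A\subseteq\{0,\dots,L-1\}={\mathbb Z}_L$. I then unfold at scale $\tfrac1L$. For a.e. $s\in[0,\tfrac1L)$ let $E(s)=\{m\in{\mathbb Z}:s+\tfrac mL\in T\}$, a finite set; a generic point $s+\tfrac nL$ is covered uniquely as $s+\tfrac mL+\tfrac aL+z$ with $m\in E(s)$, $a\in A$, $z\in{\mathbb Z}$ (the fine-fractional part $s$ is preserved, since $\tfrac aL+z\in\tfrac1L{\mathbb Z}$), so $n=m+a+Lz$. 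As $n$ ranges over ${\mathbb Z}$ this unique representation is precisely $E(s)\oplus A\oplus L{\mathbb Z}={\mathbb Z}$, i.e. $E(s)\oplus A={\mathbb Z}_L$. Only finitely many subsets $B\subseteq{\mathbb Z}$ arise as fibres, so setting $T_B=\{s\in[0,\tfrac1L):E(s)=B\}$ partitions $[0,\tfrac1L)$ up to measure zero and reconstitutes $T=\bigcup_{B\in{\mathcal B}}(T_B+\tfrac1L B)$ with $A\oplus B={\mathbb Z}_L$, which is exactly (i) and (ii).

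The main obstacle is the analytic core of the first half: showing that an \emph{arbitrary bounded measurable} tile cannot tile ${\mathbb R}$ by an irrational or aperiodic set, i.e. that the discretely supported distribution $\widehat\nu$ is in fact supported on a rational lattice. Everything else --- integrality of the period by volume counting, the folding reductions, the Newman pigeonhole step, and the fibered decomposition --- is essentially bookkeeping once ${\mathcal T}\subseteq\tfrac1L{\mathbb Z}$ is known. This rationality/periodicity dichotomy for a general measurable tile is the genuinely hard content, carried out by Lagarias and Wang \cite{LaWa96}; the delicate point is excluding exotic quasicrystalline supports for $\widehat\nu$ and extracting the arithmetic-progression structure of ${\mathcal T}$.
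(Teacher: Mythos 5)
Your outline is sound: the paper itself does not prove this theorem but quotes it from Lagarias--Wang \cite{LaWa96} (with \cite[Theorem 6.1]{KL96} supplying periodicity for general measurable, not just regional, tiles, as the paper's remark notes), and you correctly isolate that same rationality/periodicity statement as the one irreducible imported ingredient. The parts you do supply --- the volume count in one period giving $p_{\min}\in|T|\cdot{\mathbb Z}$, the choice of a common denominator $L$ with ${\mathcal T}=\tfrac1L A+{\mathbb Z}$, and the fibering of $T$ over $s\in[0,\tfrac1L)$ yielding $E(s)\oplus A={\mathbb Z}_L$ with finitely many fibre types $T_B$ --- are correct and constitute exactly the bookkeeping content of the cited structure theorem.
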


\begin{remark}
Lagarias and Wang proved this theorem under the assumption that $T$ is a bounded region (i.e. $|\partial T|=0$). However,  \cite[Theorem 6.1]{KL96} showed that any tiling for measurable sets must be periodic. The proof of the rationality is Fourier-analytic and it does not require the region assumption. For the decomposition, the region condition is also not needed as long as we do not require $T_B$ to be a region. Hence, the above theorem is true for any measurable sets.
\end{remark}

\medskip

There is no complete classification of spectral sets like Theorem \ref{th2.1}. One reason is that there is no rationality result on the spectra. However, the recent result of \cite{IoKo12} (see also \cite{DuJo12b}) made a step forward toward the classification of spectral sets. We say that $\Omega$ is a {\it $p$-tile by $\frac{1}{p}{\mathbb Z}$} if
$$
\sum_{k\in{\mathbb Z}}\chi_{\Omega}(x+\frac{k}{p}) = p. \ \mbox{a.e.}.
$$
In other words, $\Omega$ covers the real line $p$ times when it is translated by $\frac{1}{p}{\mathbb Z}$.
\medskip

\begin{theorem}\label{th2.2}
Any spectra $\Lambda$ of the spectral set $\Omega$ on ${\mathbb R}^1$ must be periodic with period an integral multiple of $|\Omega|^{-1}$. Moreover, $\Omega$ must be a $p$-tile by $\frac{1}{p}{\mathbb Z}$ where $p$ is the period of $\Lambda$.
\end{theorem}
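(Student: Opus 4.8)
The plan is to import the periodicity of the spectrum from \cite{IoKo12} and then to read off both assertions by elementary Fourier analysis of $\chi_\Omega$. Throughout I normalize so that $0\in\Lambda$, which is harmless since a translate $\Lambda-\lambda_0$ of a spectrum is again a spectrum. By the Iosevich--Kolountzakis theorem \cite{IoKo12}, $\Lambda$ is periodic; I fix a period $p$, so that $\Lambda+p=\Lambda$ and hence $p\mathbb{Z}\subseteq\Lambda$. I would first record that $\Lambda$ is separated: orthogonality of the exponentials forces $\widehat{\chi_\Omega}(\lambda-\lambda')=0$ for distinct $\lambda,\lambda'\in\Lambda$, whereas $\widehat{\chi_\Omega}$ is continuous with $\widehat{\chi_\Omega}(0)=|\Omega|>0$, so no two points of $\Lambda$ can be arbitrarily close. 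In particular $\Lambda$ meets each period in a finite set, of cardinality $N$ say.

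The key observation is that periodicity converts orthogonality into the vanishing of equally spaced Fourier samples of $\chi_\Omega$. Since $0\in\Lambda$ and $p\mathbb{Z}\subseteq\Lambda$, orthogonality of the characters $x\mapsto e^{2\pi i pm x}$ and $x\mapsto 1$ in $L^2(\Omega)$ gives $\widehat{\chi_\Omega}(pm)=\int_\Omega e^{-2\pi i pm x}\,dx=0$ for every nonzero integer $m$. I then periodize: set $H(x)=\sum_{k\in\mathbb{Z}}\chi_\Omega(x+k/p)$, a $\tfrac1p$-periodic, locally integrable function (the sum is pointwise finite because $\Omega$ is bounded). Unfolding the integral over one period into an integral over $\mathbb{R}$ shows that the Fourier coefficients of $H$ relative to the period $\tfrac1p$ are $p\,\widehat{\chi_\Omega}(pm)$. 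By the previous step these vanish for $m\neq0$, while the mean coefficient is $p\,\widehat{\chi_\Omega}(0)=p|\Omega|$; hence $H\equiv p|\Omega|$ almost everywhere. This is precisely the tiling identity $\sum_{k\in\mathbb{Z}}\chi_\Omega(x+k/p)=p|\Omega|$, i.e. $\Omega$ is a $p|\Omega|$-tile by $\tfrac1p\mathbb{Z}$, which is the stated $p$-tile once one normalizes $|\Omega|=1$.

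It remains to pin down the period, and here I would argue that the constant value $p|\Omega|$ of $H$ is a positive integer equal to $N$. One route is to substitute the decomposition $\Lambda=\bigcup_{j=1}^{N}(\mu_j+p\mathbb{Z})$ into the completeness identity $\sum_{\lambda\in\Lambda}|\widehat{\chi_\Omega}(\xi-\lambda)|^{2}=|\Omega|^{2}$, periodize each arithmetic progression, and compare the constant Fourier mode on the two sides; equivalently one invokes the Landau density $N/p=|\Omega|$ of a spectrum. Either way $N=p|\Omega|\in\mathbb{Z}^{+}$, so $p=N|\Omega|^{-1}$ is an integral multiple of $|\Omega|^{-1}$, which completes the first assertion. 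The one genuinely deep ingredient is the periodicity of $\Lambda$, which I quote rather than reprove; granting it, the remaining steps are routine, the only points needing care being the separation of $\Lambda$ and the interchange implicit in identifying the Fourier coefficients of $H$, both of which are controlled by the continuity and decay of $\widehat{\chi_\Omega}$.
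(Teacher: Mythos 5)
Your proposal is correct. Note, though, that the paper does not prove Theorem \ref{th2.2} at all: it is stated in the Preliminaries as a quoted result, with the periodicity attributed to \cite{IoKo12} and the multi-tiling consequence to \cite{DuJo12b}, so there is no in-paper argument to compare against. Your derivation is essentially the standard one from those sources: quote periodicity, observe that $0,pm\in\Lambda$ forces $\widehat{\chi_\Omega}(pm)=0$ for $m\neq 0$, and conclude via the periodization $H(x)=\sum_{k}\chi_\Omega(x+k/p)$ that $H\equiv p|\Omega|$ a.e.; the handling of the normalization $|\Omega|=1$ in the phrase ``$p$-tile'' matches the paper's implicit convention. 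One small simplification: the final step does not need the Landau density or the periodized completeness identity $\sum_{\lambda}|\widehat{\chi_\Omega}(\xi-\lambda)|^2=|\Omega|^2$. Since $H(x)$ counts the translates $k/p$ with $x+k/p\in\Omega$, it is pointwise a non-negative integer, so its a.e.\ constant value $p|\Omega|$ is automatically a positive integer, giving $p\in|\Omega|^{-1}\mathbb{Z}^{+}$ directly. Your Parseval route is also valid (and has the side benefit of identifying $p|\Omega|$ with the number $N$ of spectrum points per period), but it is an optional extra rather than a needed ingredient.
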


\medskip

From Theorem \ref{th2.2}, since $\Omega$ is a $p$-tile by $\frac{1}{p}{\mathbb Z}$, we have the following natural decomposition of a spectral set $\Omega$ on ${\mathbb R}^1$. For $x\in\frac{1}{p}{\mathbb Z}$ and $S\subset {\mathbb Z}$ with $\#S=p$, where $p$ is the period,
$$
\Omega_x = \{k\in{\mathbb Z}: x+\frac{k}{p}\in\Omega\}, \  A_S = \{x\in[0,\frac{1}{p}): \Omega_x = S\}
$$
\begin{equation}\label{eq2.1}
\Omega = \bigcup_{|S|=p}\left(A_S+\frac{S}{p}\right), \ \bigcup_{|S|=p}A_S = [0,\frac{1}{p}).
\end{equation}
The following proposition can be found in \cite[Proposition 2.8 and 2.9]{DuJo12b}. See also \cite{Ped96}. It describes the relation between the spectral property of $p$-tiles and their decomposition.

\begin{proposition}\label{prop2.3}
Let $\Omega$ be a $p$-tile by $\frac{1}{p}{\mathbb Z}$. Then a set $\Lambda$ of the form $\Lambda = \{0,\lambda_1,\cdots,\lambda_{p-1}\}+p{\mathbb Z}$ is a spectrum of $\Omega$ if and only if $\{0,\lambda_1\cdots,\lambda_{p-1}\}$ is a spectrum for $\frac{1}{p}\Omega_x$ for a.e. $x\in[0,\frac{1}{p})$. Moreover, if $\Omega_x$ have a common tiling set ${\mathcal T}$, then $\Omega$ tiles ${\mathbb R}^1$ by the tiling set $\frac{1}{p}{\mathcal T}$.
\end{proposition}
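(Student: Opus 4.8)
The plan is to exploit the fiber (direct-integral) decomposition of $L^2(\Omega)$ coming from writing every $y\in\mathbb R$ uniquely as $y=x+k/p$ with $x\in[0,1/p)$ and $k\in\mathbb Z$. Since $\#\Omega_x=p$ for a.e.\ $x$ (Theorem \ref{th2.2}), the map $f\mapsto(f_x)_{x\in[0,1/p)}$ with $f_x(k):=f(x+k/p)$ for $k\in\Omega_x$ is a unitary isomorphism $L^2(\Omega)\cong\int_{[0,1/p)}^{\oplus}\ell^2(\Omega_x)\,dx$, because $\|f\|_{L^2(\Omega)}^2=\int_0^{1/p}\sum_{k\in\Omega_x}|f(x+k/p)|^2\,dx$. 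Only finitely many sets $S$ occur as $\Omega_x$ (as $\Omega$ is bounded), so all measurability issues are trivial. First I would record that under this identification an exponential $e^{2\pi i\lambda\cdot}$ with $\lambda=\lambda_j+pn$ has fiber $(e^{2\pi i\lambda\cdot})_x(k)=e^{2\pi i\lambda x}\,e^{2\pi i\lambda_j k/p}$ (using $e^{2\pi i nk}=1$); its $k$-dependence is exactly the vector $v_j(x):=(e^{2\pi i\lambda_j k/p})_{k\in\Omega_x}\in\ell^2(\Omega_x)$ appearing in the definition of a spectrum of $\tfrac1p\Omega_x$, while its $x$-dependence is the pure character $e^{2\pi i\lambda x}$.

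The engine of the proof is the identity, for $\lambda=\lambda_j+pn$ and $\lambda'=\lambda_{j'}+pn'$,
\[
\langle e^{2\pi i\lambda\cdot},e^{2\pi i\lambda'\cdot}\rangle_{L^2(\Omega)}=\int_0^{1/p}e^{2\pi i(\lambda-\lambda')x}\,c_{j,j'}(x)\,dx,\qquad c_{j,j'}(x):=\sum_{k\in\Omega_x}e^{2\pi i(\lambda_j-\lambda_{j'})k/p},
\]
obtained by expanding the inner product fiberwise. Writing $\lambda-\lambda'=(\lambda_j-\lambda_{j'})+p(n-n')$ and absorbing $e^{2\pi i(\lambda_j-\lambda_{j'})x}$ into $c_{j,j'}$, this inner product is, up to a fixed constant, the $(n'-n)$-th Fourier coefficient of the single function $h_{j,j'}(x):=e^{2\pi i(\lambda_j-\lambda_{j'})x}c_{j,j'}(x)$ on $[0,1/p)$. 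Since $c_{j,j'}(x)=\langle v_j(x),v_{j'}(x)\rangle_{\ell^2(\Omega_x)}$, and there are $p$ vectors $v_0(x),\dots,v_{p-1}(x)$ in the $p$-dimensional space $\ell^2(\Omega_x)$, the condition ``$\{0,\lambda_1,\dots,\lambda_{p-1}\}$ is a spectrum of $\tfrac1p\Omega_x$'' is equivalent to the orthogonality $c_{j,j'}(x)=0$ for all $j\neq j'$.

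Both directions are then short. For ($\Rightarrow$): if $\Lambda$ is a spectrum, the displayed inner product vanishes whenever $j\neq j'$ for every $n,n'$; fixing $j\neq j'$ and letting $n'-n$ range over $\mathbb Z$ forces all Fourier coefficients of $h_{j,j'}$ to vanish, hence $c_{j,j'}\equiv0$ a.e., which is the fiber spectrum condition. For ($\Leftarrow$): if $c_{j,j'}\equiv0$ a.e.\ for $j\neq j'$, orthogonality of the $e^{2\pi i\lambda\cdot}$ is immediate (the case $j=j'$, $n\neq n'$ uses $c_{j,j}(x)\equiv\#\Omega_x=p$, whose only nonzero Fourier coefficient is the $0$-th, which also yields $\|e^{2\pi i\lambda\cdot}\|^2=1$). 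Completeness I would obtain by verifying Parseval directly: expanding $\langle f,e^{2\pi i(\lambda_j+pn)\cdot}\rangle$ fiberwise identifies it (up to $1/\sqrt p$) with a Fourier coefficient of $x\mapsto e^{-2\pi i\lambda_j x}\langle f_x,v_j(x)\rangle$, so Plancherel on $[0,1/p)$ and summation over $j$ give $\sum_{\lambda\in\Lambda}|\langle f,e^{2\pi i\lambda\cdot}\rangle|^2=\tfrac1p\int_0^{1/p}\sum_j|\langle f_x,v_j(x)\rangle|^2\,dx$; the fiberwise Parseval identity $\sum_j|\langle f_x,v_j(x)\rangle|^2=p\|f_x\|^2$ (valid because the $v_j(x)$ form an orthogonal basis with $\|v_j(x)\|^2=p$) collapses this to $\int_0^{1/p}\|f_x\|^2\,dx=\|f\|_{L^2(\Omega)}^2$. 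I expect this completeness step to be the main obstacle, since it is where the fiberwise basis property must be ``integrated up'' to a basis of $L^2(\Omega)$, and it is exactly where orthonormality (not just orthogonality) on each fiber together with the interchange of $\sum_n$, $\sum_j$ and $\int dx$ (justified by Plancherel and the finiteness of the set of $S$'s) must be handled carefully.

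For the final (``Moreover'') claim I would argue fiberwise again: writing $y\in\mathbb R$ uniquely as $y=x+m/p$ with $x\in[0,1/p)$, $m\in\mathbb Z$, and using $\Omega_x\oplus\mathcal T=\mathbb Z$, each $m$ is uniquely $m=k+t$ with $k\in\Omega_x$, $t\in\mathcal T$, so $y=(x+k/p)+t/p$ with $x+k/p\in\Omega$ and $t/p\in\tfrac1p\mathcal T$, uniquely. Hence $\Omega\oplus\tfrac1p\mathcal T=\mathbb R$, i.e.\ $\tfrac1p\mathcal T$ is a tiling set for $\Omega$; the hypothesis of a \emph{common} tiling set is precisely what guarantees that the same $\mathcal T$ tiles $\Omega_x$ for a.e.\ $x$, so that the fiberwise tilings assemble into a tiling of $\mathbb R$.
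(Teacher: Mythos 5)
Your proof is correct. The paper does not prove Proposition \ref{prop2.3} itself but quotes it from \cite{DuJo12b} (see also \cite{Ped96}), and your argument --- decomposing $L^2(\Omega)$ into fibers $\ell^2(\Omega_x)$ over $[0,\frac{1}{p})$, reading the inner products of exponentials as Fourier coefficients of the fiberwise Gram functions $c_{j,j'}$, and assembling the tiling fiberwise for the last claim --- is essentially the standard argument used in that reference.
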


\medskip

We also need another simple proposition, which can be found, for the most part, in \cite{KoMa06}.
\begin{proposition}\label{prop2.4}
Let $\Lambda$ be a spectrum for $\Omega=A+[0,1)$, $0\in\Lambda$. Then $1$ is a period of $\Lambda$. Moreover, if we write $\Lambda = \Gamma+{\mathbb Z}$ with $\Gamma\subset[0,1)$, then $\Gamma$ is a spectrum for $A$ if and only if $\Lambda$ is a spectrum for $A+[0,1)$.
\end{proposition}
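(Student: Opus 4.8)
The plan is to work entirely on the Fourier side, exploiting the factorization of the Fourier transform of $\chi_\Omega$ coming from $\Omega = A+[0,1)$. Writing $\widehat{\chi_\Omega}(\xi)=\int_\Omega e^{2\pi i\xi x}\,dx$, a one-line computation gives
\[
\widehat{\chi_\Omega}(\xi)=A\!\left(e^{2\pi i\xi}\right)\,\widehat{\chi_{[0,1)}}(\xi),
\]
where $A(x)=\sum_{a\in A}x^a$ is the mask polynomial, so that $\xi\mapsto A(e^{2\pi i\xi})$ is $1$-periodic, while $B(\xi):=\widehat{\chi_{[0,1)}}(\xi)=(e^{2\pi i\xi}-1)/(2\pi i\xi)$ vanishes exactly on $\mathbb Z\setminus\{0\}$. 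I would record three standard facts: (a) $\Lambda$ is a spectrum of $\Omega$ iff $\sum_{\lambda\in\Lambda}|\widehat{\chi_\Omega}(\xi-\lambda)|^2=|\Omega|^2=N^2$ for a.e.\ $\xi$ (the Parseval characterization, $N=\#A$); (b) $\sum_{k\in\mathbb Z}|B(\xi-k)|^2=1$ for every $\xi$ (Parseval for the spectrum $\mathbb Z$ of $[0,1)$); and (c) $\int_0^1|A(e^{2\pi i\eta})|^2\,d\eta=N$. The analogous characterization for a finite set says $\Gamma\subset[0,1)$ is a spectrum of $A$ iff $\sum_{\gamma\in\Gamma}|A(e^{2\pi i(\xi-\gamma)})|^2=N^2$ for all $\xi$.

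For the equivalence (the ``moreover'' clause), suppose $\Lambda=\Gamma+\mathbb Z$ with $\Gamma\subset[0,1)$. Splitting each $\lambda$ as $\gamma+k$ and using the $1$-periodicity of $A(e^{2\pi i\cdot})$ to pull $|A(e^{2\pi i(\xi-\gamma-k)})|^2=|A(e^{2\pi i(\xi-\gamma)})|^2$ out of the inner sum, fact (b) collapses the sum over $k$ to $1$ and yields
\[
\sum_{\lambda\in\Lambda}|\widehat{\chi_\Omega}(\xi-\lambda)|^2=\sum_{\gamma\in\Gamma}|A(e^{2\pi i(\xi-\gamma)})|^2 .
\]
By (a) the left side is $\equiv N^2$ exactly when $\Lambda$ is a spectrum of $\Omega$, and the right side is $\equiv N^2$ exactly when $\Gamma$ is a spectrum of $A$; since these are the same function of $\xi$, the equivalence is immediate.

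The substantive part is showing that \emph{every} spectrum $\Lambda$ (with $0\in\Lambda$) is $1$-periodic. Here I cannot assume coset structure, so I would decompose $\Lambda$ by residues mod $1$: let $\Gamma_0$ be the set of occurring residues and $\Lambda_\gamma=\Lambda\cap(\gamma+\mathbb Z)\subset\gamma+\mathbb Z$. First, orthogonality of $\Lambda$ together with $B\neq0$ off the integers forces $A(e^{2\pi i(\gamma-\gamma')})=0$ for distinct $\gamma,\gamma'\in\Gamma_0$; hence the vectors $(e^{2\pi i\gamma a})_{a\in A}$ are pairwise orthogonal in $\mathbb C^N$ and $\#\Gamma_0\le N$. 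Next, writing $c_\gamma(\xi)=\sum_{\lambda\in\Lambda_\gamma}|B(\xi-\lambda)|^2$, fact (b) gives $c_\gamma\le1$, so (a) yields
\[
N^2=\sum_{\gamma\in\Gamma_0}|A(e^{2\pi i(\xi-\gamma)})|^2\,c_\gamma(\xi)\le\sum_{\gamma\in\Gamma_0}|A(e^{2\pi i(\xi-\gamma)})|^2 .
\]
Integrating this inequality over $\xi\in[0,1)$ and using (c) gives $N^2\le (\#\Gamma_0)\,N$, forcing $\#\Gamma_0=N$; combined with the pairwise orthogonality, $\Gamma_0$ is then a spectrum of $A$, so the right-hand sum is identically $N^2$. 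Feeding this back, the displayed inequality becomes $\sum_{\gamma}|A(e^{2\pi i(\xi-\gamma)})|^2(1-c_\gamma(\xi))=0$ for a.e.\ $\xi$; since $A(e^{2\pi i\cdot})$ has only finitely many zeros mod $1$, this forces $c_\gamma(\xi)=1$ for a.e.\ $\xi$, and equality in $c_\gamma\le1$ at a single non-integer point $\xi-\gamma$ forces $\Lambda_\gamma=\gamma+\mathbb Z$. Thus $\Lambda=\Gamma_0+\mathbb Z$ is $1$-periodic.

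I expect the main obstacle to be exactly this periodicity step: orthogonality alone is far too weak (for instance $\{0\}$ is orthogonal but neither complete nor periodic), so completeness must genuinely be used, and the delicate points are (i) upgrading the coset inequality to the identity $\sum_\gamma|A(e^{2\pi i(\xi-\gamma)})|^2\equiv N^2$ through the dimension count $\#\Gamma_0=N$, and (ii) propagating the resulting equality $c_\gamma\equiv1$ from ``a.e.\ $\xi$'' to the full-coset conclusion $\Lambda_\gamma=\gamma+\mathbb Z$, where one must check that the finitely many zeros of $A(e^{2\pi i\cdot})$ and the integer points do not obstruct the argument.
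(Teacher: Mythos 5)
Your proof is correct, but it takes a genuinely different route from the paper's. The paper handles the periodicity by a one-line direct computation special to $\Omega=A+[0,1)$: for $\lambda,\lambda'\in\Lambda$ one has
$\int_\Omega e^{2\pi i(\lambda-\lambda'+1)x}\,dx=\frac{\lambda-\lambda'}{\lambda-\lambda'+1}\int_\Omega e^{2\pi i(\lambda-\lambda')x}\,dx$,
so if $\lambda+1\notin\Lambda$ then $e_{\lambda+1}$ is orthogonal to every basis element (the factor kills the case $\lambda=\lambda'$, orthogonality kills the rest), contradicting completeness; the ``moreover'' clause is then dispatched with a citation. You instead work entirely with the Parseval/tight-frame identity $\sum_\lambda|\widehat{\chi_\Omega}(\xi-\lambda)|^2=N^2$, decompose $\Lambda$ into residue classes mod $1$, and run a counting-plus-equality-case argument: orthogonality across classes gives $\#\Gamma_0\le N$, integration over a period gives $\#\Gamma_0\ge N$, and the resulting identity $\sum_\gamma|A(e^{2\pi i(\xi-\gamma)})|^2\equiv N^2$ forces each $c_\gamma\equiv 1$ a.e.\ and hence each class to be a full coset. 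Your route is longer but buys more: it is self-contained (it actually proves the ``moreover'' equivalence that the paper only cites from \cite{KoMa06}), and it shows along the way that the residue set $\Gamma_0$ of \emph{any} spectrum of $\Omega$ is automatically a spectrum of $A$, which is slightly stronger than the statement as given. The paper's argument is shorter and more elementary, but yields only the periodicity. All the delicate points you flag (the a.e.-to-everywhere upgrade via continuity of the finite sum, the finitely many zeros of $A(e^{2\pi i\cdot})$, and the strict positivity of $|\widehat{\chi_{[0,1)}}|$ off the nonzero integers) are handled correctly.
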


\begin{proof}
 Suppose $1$ is not a period, we can find $\lambda$ such that $\lambda+1\not\in\Lambda$. For any $\lambda'\in\Lambda$,
$$
\int_{\Omega}e^{2\pi i (\lambda-\lambda'+1)x}dx = \frac{1}{2\pi i (\lambda-\lambda'+1)}\sum_{a\in A} (e^{2\pi i (\lambda-\lambda'+1)(a+1)}-e^{2\pi i (\lambda-\lambda'+1)a}) $$$$= \frac{1}{2\pi i (\lambda-\lambda'+1)}\sum_{a\in A} (e^{2\pi i (\lambda-\lambda')(a+1)}-e^{2\pi i (\lambda-\lambda')a})=\frac{\lambda-\lambda'}{\lambda-\lambda'+1}\int_{\Omega}e^{2\pi i (\lambda-\lambda')x}dx.
$$
It is clear that if $\lambda=\lambda'$, the quantity above is zero. If $\lambda\neq\lambda'$, the orthogonality of $\Lambda$ shows that the last integral is zero. Therefore, $e^{2\pi i (\lambda+1)x}$ is orthogonal to all elements in  $\Lambda$ which contradicts to the completeness of the spectrum. Hence, 1 must be a period.  The last statement is well known (see e.g. \cite{KoMa06}).
\end{proof}

\medskip

Another important result we will need is the decomposition theorem of the tile if the tiling set is a subset of $k{\mathbb Z}$, due to Coven and Meyerowitz \cite[Lemma 2.5]{CoMe99}.

\begin{theorem}\label{th2.3}
Let $A$ be a finite subset of non-negative integers such that  $A\oplus C = {\mathbb Z}$ for some $C$. Suppose that $C\subset k{\mathbb Z}$ for some $k>1$.  If we define $a_i = \min\{a\in A: a\equiv i (\mbox{{\rm mod}} \ k)\}$ for $i=0,\cdots,k-1$ and
$$
A_i = \{\frac{a-a_i}{k}: a\equiv i (\mbox{{\rm mod}} \ k)\},
$$
then
\vspace{0.2cm}
(i) $A(x) = x^{a_0}A_0(x^{k})+\cdots x^{a_{k-1}}A_{k-1}(x^{k})$.

\vspace{0.2cm}

(ii) $A_i\oplus \frac{1}{k}C = {\mathbb Z}$.

\vspace{0.2cm}

(iii)  $A$ is equidistributed ({\rm mod} $k$) (i.e. $\#A_i = \frac{\#A}{k}$ for all $i$), In particular, $k$ divides $\#A$.
\end{theorem}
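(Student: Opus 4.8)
The plan is to treat the three parts in order, as they are largely independent. Part (i) is purely formal. I would partition $A$ by residues mod $k$, writing $A^{(i)}=\{a\in A: a\equiv i \pmod k\}$, so that $A(x)=\sum_{i=0}^{k-1}\sum_{a\in A^{(i)}}x^a$. Since $a_i$ is by definition the minimum of $A^{(i)}$, every $a\in A^{(i)}$ satisfies $a\ge a_i$ and $a\equiv a_i\pmod k$, so $(a-a_i)/k$ is a nonnegative integer lying in $A_i$ and $x^a=x^{a_i}(x^k)^{(a-a_i)/k}$. Summing over $a\in A^{(i)}$ gives $x^{a_i}A_i(x^k)$, and summing over $i$ yields (i). This step is routine.

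For part (ii) the key observation is that $C\subset k\mathbb{Z}$ forces the tiling $A\oplus C=\mathbb{Z}$ to respect residue classes mod $k$: because every $c\in C$ is divisible by $k$, the residue of $a+c$ mod $k$ equals that of $a$, so the integers in the class $i+k\mathbb{Z}$ are exactly the sums $a+c$ with $a\in A^{(i)}$ and $c\in C$, and this representation remains unique. Hence $A^{(i)}\oplus C=i+k\mathbb{Z}$ for each $i$. I would then simply rescale: $A^{(i)}=a_i+kA_i$ and $C=k\cdot(\tfrac1k C)$, so $a_i+k\bigl(A_i\oplus\tfrac1k C\bigr)=i+k\mathbb{Z}$; subtracting $a_i$ (using $a_i\equiv i\pmod k$) and dividing by $k$ gives $A_i\oplus\tfrac1k C=\mathbb{Z}$, which is (ii).

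Part (iii) carries the real content and is the step I expect to be the main obstacle, since $C$ is infinite and a direct count is unavailable; I would argue via density. Because $\Omega=A+[0,1)$ is a bounded tile of $\mathbb{R}$ with tiling set $C$, Theorem \ref{th2.1} shows $C$ is periodic, so its natural density $\operatorname{dens}(C)$ exists. From the disjoint cover $\mathbb{Z}=\bigsqcup_{a\in A}(a+C)$ and the translation-invariance of density I obtain $1=\#A\cdot\operatorname{dens}(C)$, i.e. $\operatorname{dens}(C)=1/\#A$. Applying the same principle to the tiling $A_i\oplus\tfrac1k C=\mathbb{Z}$ from part (ii) gives $1=\#A_i\cdot\operatorname{dens}(\tfrac1k C)$. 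Finally, compressing $C$ by the factor $k$ multiplies density by $k$, so $\operatorname{dens}(\tfrac1k C)=k\operatorname{dens}(C)=k/\#A$; hence $\#A_i=\#A/k$ for every $i$. Summing over $i=0,\dots,k-1$ recovers $\#A=\sum_i\#A_i=k\cdot(\#A/k)$ and in particular shows $k\mid\#A$. The one point requiring care is the justification that density is well defined and transforms correctly under the dilation $C\mapsto\tfrac1k C$; both follow from the periodicity supplied by Theorem \ref{th2.1}.
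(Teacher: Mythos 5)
Your proof is correct. Note that the paper itself offers no proof of this statement --- it is quoted verbatim from Coven--Meyerowitz \cite[Lemma 2.5]{CoMe99} --- so there is nothing internal to compare against; your argument (the residue-class decomposition for (i) and (ii), and the periodicity-plus-density count for (iii), using Theorem \ref{th2.1} to guarantee that $\operatorname{dens}(C)=1/\#A$ exists and scales by $k$ under $C\mapsto\tfrac1k C$) is the standard one and is complete.
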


\medskip

\medskip

\section{Fuglede Conjectures on ${\mathbb Z}_n$, ${\mathbb Z}$ and ${\mathbb R}$ }

In this section, we will discuss in detail the relations between the Fuglede problems in ${\mathbb Z}_n$, ${\mathbb Z}$ and ${\mathbb R}$ and  Theorem \ref{th0.0} will be proved. First, it was proved in \cite[Proposition 1, Remark 2]{FaMaMo06} that

\medskip

\begin{center}
{\bf (T-S(${\mathbb Z}_n$))}  $\Longleftrightarrow$ {\bf (USC(${\mathbb Z}_n$))} and {\bf (S-T(${\mathbb Z}_n$))}  $\Longleftrightarrow$ {\bf (UTC(${\mathbb Z}_n$))}.
\end{center}
 (Recall that the statement {\bf (T-S(${\mathbb Z}_n$))} means {\bf (T-S(${\mathbb Z}_n$))} is true for all $n\in{\mathbb N}$.)
\medskip

To make the connection to the statements on ${\mathbb R}^1$, we need the following theorem, which has been essential in constructing counterexamples for the Fuglede conjecture in high dimensions \cite{Tao04,KoMa06}. One may refer to \cite[Proposition 2.1 and 2.5]{M05} and \cite[Theorem 4.1]{KoMa06} for the proofs.

\begin{theorem}\label{th5.1}
 Let $A\subset {\mathbb Z}_n$ and let $T(k) = n\{0,1,\cdots,k-1\}$. Define $B(k) = A+T(k)$. Then for large enough $k$,

 (i) $B(k)+[0,1)$ is a spectral set on ${\mathbb R}^1$ if and only if $A$ is spectral in ${\mathbb Z}_n$.

 \vspace{0.2cm}

 (ii) $B(k)+[0,1)$ is a tile on ${\mathbb R}^1$ if and only if $A$ is tile in ${\mathbb Z}_n$
\end{theorem}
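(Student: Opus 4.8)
The plan is to prove both directions by transferring between the finite group $\mathbb{Z}_n$ and $\mathbb{R}^1$ through the explicit structure of $B(k) = A + n\{0,1,\dots,k-1\}$. The key observation is that $B(k)+[0,1)$ is a set built from integer translates, and for $k$ large we want its spectral/tiling behavior on $\mathbb{R}^1$ to be governed entirely by the behavior of $A$ inside $\mathbb{Z}_n$. I would first pin down the arithmetic: since $A \subset \{0,1,\dots,n-1\}$ (viewing $\mathbb{Z}_n$ representatives) and $T(k) = n\{0,1,\dots,k-1\}$, the sum $A + T(k)$ is a genuine direct sum of integers, so $B(k)(x) = A(x)\cdot T(k)(x)$ as mask polynomials, where $T(k)(x) = 1 + x^n + \cdots + x^{n(k-1)}$. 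This factorization is the workhorse for both parts.

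\medskip

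For the tiling direction (ii), I would argue as follows. If $A$ tiles $\mathbb{Z}_n$, say $A \oplus C = \mathbb{Z}_n$, then I expect $B(k)+[0,1)$ to tile $\mathbb{R}$ using a tiling set of the form $C + n k \mathbb{Z}$: the set $T(k)$ fills out one ``period block'' of length $nk$ and $C$ handles the residues mod $n$, so the combination tiles. Conversely, if $B(k)+[0,1)$ tiles $\mathbb{R}$, then by Theorem \ref{th2.1} its tiling set is periodic and rational, and I would reduce mod $n$ (or rather extract the residue structure) to recover a tiling partner for $A$ in $\mathbb{Z}_n$. The role of ``$k$ large enough'' is to guarantee that the long arithmetic-progression factor $T(k)$ carries enough cyclotomic factors $\Phi_n, \Phi_{2n}, \dots$ so that the tiling/spectral conditions localize to $A$ rather than being an accident of small $k$.

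\medskip

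For the spectral direction (i), the mask factorization again drives the argument. If $A$ is spectral in $\mathbb{Z}_n$ with spectrum $\Lambda_A \subset \mathbb{Z}_n$, one builds a candidate spectrum for $B(k)+[0,1)$ by combining $\frac{1}{n}\Lambda_A$ with a spectrum for the progression $T(k)+[0,1)$ (which, being a scaled interval union, is easily seen to be spectral); here Proposition \ref{prop2.4} lets me pass between spectra of $B(k)$ as a subset of $\mathbb{Z}$ and spectra of $B(k)+[0,1)$ as a subset of $\mathbb{R}$. The converse uses that any spectrum of $B(k)+[0,1)$ is periodic (Theorem \ref{th2.2}) with an integer-multiple-of-$|B(k)|^{-1}$ period; forcing this period to interact correctly with the residues mod $n$ recovers a spectrum of $A$ in $\mathbb{Z}_n$.

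\medskip

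The main obstacle I anticipate is the converse of each part, namely extracting the finite-group structure from the real-line hypothesis. Going from $\mathbb{Z}_n$ to $\mathbb{R}$ is essentially a direct construction, but going backward requires controlling the spectrum (or tiling set) of $B(k)+[0,1)$ well enough to project it down to $\mathbb{Z}_n$, and this is exactly where the ``$k$ sufficiently large'' hypothesis must be used quantitatively: one needs $k$ large so that the only way $B(k)+[0,1)$ can be spectral or tile is through a spectrum/tiling set that respects the period $n$, i.e. that factors through the $A$-part. Making this localization rigorous — showing that a real spectrum cannot ``cheat'' by exploiting the length of $T(k)$ — is the technical heart of the proof, and I would expect to invoke the periodicity and $p$-tile structure from Theorem \ref{th2.2} together with the mask-polynomial factorization to rule out such possibilities.
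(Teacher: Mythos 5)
First, a point of reference: the paper does not prove Theorem \ref{th5.1} at all; it explicitly defers to \cite[Propositions 2.1 and 2.5]{M05} and \cite[Theorem 4.1]{KoMa06}. So there is no in-paper argument to compare against, and your proposal has to stand on its own. The forward implications you sketch (from $\mathbb{Z}_n$ to $\mathbb{R}$) are essentially correct and in fact hold for \emph{every} $k\geq 1$: since $A\subset\{0,\dots,n-1\}$ and $T(k)=n\{0,\dots,k-1\}$, the sum is direct, $B(k)(x)=A(x)T(k)(x)$, a tiling complement $C$ of $A$ in $\mathbb{Z}_n$ yields the tiling set $C\oplus nk\mathbb{Z}$ for $B(k)$, and a spectrum $\Lambda_A\subset\mathbb{Z}_n$ yields the spectrum $\frac1n\Lambda_A\oplus\frac1{nk}\{0,\dots,k-1\}\oplus\mathbb{Z}$ for $B(k)+[0,1)$ (orthogonality from the factorization, completeness by counting in $\mathbb{Z}_{nk}$ via Proposition \ref{prop2.4}). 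That part is fine.

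The genuine gap is that the two converse implications --- which are the entire content of the theorem and the only place the hypothesis ``$k$ large'' enters --- are not proved; they are only announced (``I would reduce mod $n$'', ``forcing this period to interact correctly with the residues mod $n$''). Reducing mod $n$ is not an argument and in fact fails as stated: a set $A\subset\{0,\dots,n-1\}$ can perfectly well tile $\mathbb{Z}$ without tiling $\mathbb{Z}_n$ (take $A=\{0,1\}$ and $n=3$), so knowing that $B(k)$ tiles $\mathbb{Z}$ with some periodic rational tiling set does not by itself produce a tiling complement inside the group $\mathbb{Z}_n$; one must show that for $k$ large the long progression factor forces the tiling to be compatible with the period $nk$, and this requires a real argument (this is the content of \cite[Theorem 4.1]{KoMa06}). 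The same applies on the spectral side: Theorem \ref{th2.2} gives a period that is an integer multiple of $1/\#B(k)$, but nothing in your sketch extracts from a spectrum of $B(k)+[0,1)$ a set of $\#A$ points of $\frac1n\mathbb{Z}_n$ that are pairwise orthogonal for $A$; a priori the spectrum could consist of irrational points living off the zeros of $A(e^{2\pi i\xi})$ that have denominator $n$. Finally, a small factual slip: $T(k)(x)=(x^{nk}-1)/(x^n-1)$, so its cyclotomic factors are the $\Phi_d$ with $d\mid nk$ and $d\nmid n$; in particular $\Phi_n$ does \emph{not} divide $T(k)$, contrary to your heuristic about which cyclotomic factors the progression ``carries''. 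As written, the proposal is a correct setup plus an accurate diagnosis of where the difficulty lies, but it does not close either hard direction.
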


The following is our main conclusion.

\begin{theorem}\label{th5.2}
(i) {\bf (T-S(${\mathbb Z}_n$))} $\Longleftrightarrow$ {\bf (USC(${\mathbb Z}_n$))} $\Longleftrightarrow$ {\bf (T-S(${\mathbb R}$))} $\Longleftrightarrow$  {\bf (T-S(${\mathbb Z}$))}.

\vspace{0.2cm}

(ii) {\bf (S-T(${\mathbb Z}_n$))} $\Longleftrightarrow$ {\bf (UTC(${\mathbb Z_n}$))} and {\bf (S-T(${\mathbb R}$))}  $\Longrightarrow$ {\bf (S-T(${\mathbb Z}$))}  $\Longrightarrow$ {\bf (S-T(${\mathbb Z}_n$))}. If  any spectral set $\Omega$ of Lebesgue measure 1 has a rational spectrum, then the converse holds.
\end{theorem}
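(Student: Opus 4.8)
```latex
The plan is to prove Theorem \ref{th5.2} by combining the finite-group transfer results (Theorem \ref{th5.1}) with the structure theorems for tiles and spectra on $\mathbb{R}^1$ (Theorems \ref{th2.1} and \ref{th2.2}). For part (i), the equivalence {\bf (T-S($\mathbb{Z}_n$))} $\Longleftrightarrow$ {\bf (USC($\mathbb{Z}_n$))} is already quoted from \cite{FaMaMo06}, so I would focus on linking the group statements to {\bf (T-S($\mathbb{R}$))}. The implication {\bf (T-S($\mathbb{R}$))} $\Longrightarrow$ {\bf (T-S($\mathbb{Z}_n$))} is the easy direction: given a tile $A$ in $\mathbb{Z}_n$, apply Theorem \ref{th5.1}(ii) to produce $B(k)+[0,1)$, a tile in $\mathbb{R}$; by {\bf (T-S($\mathbb{R}$))} it is spectral, and then Theorem \ref{th5.1}(i) (read backwards) forces $A$ to be spectral in $\mathbb{Z}_n$. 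The reverse implication {\bf (T-S($\mathbb{Z}_n$))} $\Longrightarrow$ {\bf (T-S($\mathbb{R}$))} is the substantive one and is where I expect the real work: here one must descend from an arbitrary bounded tile $\Omega\subset\mathbb{R}$ to a finite cyclic group.

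For that descent I would invoke Theorem \ref{th2.1}: a bounded tile $\Omega$ has a rational, periodic tiling set, so after rescaling we may assume the period lattice contains $\mathbb{Z}$ and obtain the decomposition $\Omega=\bigcup_{B\in\mathcal{B}}(T_B+\tfrac1L B)$ with $A\oplus B=\mathbb{Z}_L$ for each $B\in\mathcal{B}$. The hypothesis {\bf (T-S($\mathbb{Z}_n$))} (equivalently {\bf (USC($\mathbb{Z}_n$))}) applied to the family $\{B:B\in\mathcal{B}\}$, all of which are tiled by the common complement $A$ in $\mathbb{Z}_L$, yields a \emph{common} spectrum $\Gamma\subset\mathbb{Z}_L$ for all the $B$'s. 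The task is then to assemble $\Gamma$ into a genuine spectrum for the continuous set $\Omega$; this is precisely where Proposition \ref{prop2.3} enters, reducing the spectrality of $\Omega$ to the simultaneous spectrality of the fiber pieces $\tfrac1L\Omega_x$, which the common spectrum $\Gamma$ supplies. The inclusion of {\bf (T-S($\mathbb{Z}$))} in the chain should follow by an analogous but simpler argument, since a tile $A\subset\mathbb{Z}$ corresponds to the tile $A+[0,1)$ in $\mathbb{R}$ via Proposition \ref{prop2.4}, letting one pass freely between the $\mathbb{Z}$ and $\mathbb{R}$ statements once the $\mathbb{R}$ equivalence is in hand.

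For part (ii) the situation is parallel but genuinely asymmetric, and I expect the asymmetry to be the conceptual crux. The first equivalence {\bf (S-T($\mathbb{Z}_n$))} $\Longleftrightarrow$ {\bf (UTC($\mathbb{Z}_n$))} is again quoted. The implications {\bf (S-T($\mathbb{R}$))} $\Longrightarrow$ {\bf (S-T($\mathbb{Z}$))} $\Longrightarrow$ {\bf (S-T($\mathbb{Z}_n$))} run in the spectral-to-tile direction and use Theorem \ref{th5.1} and Proposition \ref{prop2.4} in the same routine manner as the easy directions above: a spectral set downstairs lifts to a spectral set in $\mathbb{R}$, which tiles by hypothesis, and the tiling descends. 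The hard part is the converse, and this is exactly the point the paper flags: to run the argument backwards one needs a spectrum of $\Omega$ that is \emph{rational}, because only a rational spectrum has a finite minimal period $p$ that lets one invoke the $p$-tile decomposition of Theorem \ref{th2.2} and the fiberwise correspondence of Proposition \ref{prop2.3} to land in a finite group $\mathbb{Z}_p$. Without rationality, an irrational period obstructs the reduction to $\mathbb{Z}_n$ entirely, since there is no finite cyclic group to descend to. Thus I would state the converse under the standing hypothesis that every spectral set of measure $1$ admits a rational spectrum; granting that, Theorem \ref{th2.2} gives an integer period $p$, Proposition \ref{prop2.3} reduces spectrality of $\Omega$ to that of the fibers $\tfrac1p\Omega_x$ in $\mathbb{Z}_p$, the hypothesis {\bf (S-T($\mathbb{Z}_n$))} (equivalently {\bf (UTC($\mathbb{Z}_n$))}) produces a common tiling set for these fibers, and the final clause of Proposition \ref{prop2.3} lifts that common tiling set to a tiling set $\tfrac1p\mathcal{T}$ for $\Omega$, completing the converse.
```
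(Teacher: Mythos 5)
Your outline follows the paper's proof quite closely: part (i) hinges on the Lagarias--Wang decomposition (Theorem \ref{th2.1}) combined with {\bf (USC(${\mathbb Z}_n$))} and Proposition \ref{prop2.3}, the loop back to ${\mathbb Z}_n$ is closed with Theorem \ref{th5.1}, and the converse in part (ii) runs through Theorem \ref{th2.2}, Proposition \ref{prop2.3} and {\bf (UTC(${\mathbb Z}_n$))}. However, two steps are pitched at the wrong scale and would not go through as written. In {\bf (USC(${\mathbb Z}_n$))}$\Rightarrow${\bf (T-S(${\mathbb R}$))} you propose to apply Proposition \ref{prop2.3} to fibers $\frac1L\Omega_x$; but that proposition requires $\Omega$ to be a $p$-tile by $\frac1p{\mathbb Z}$ with the \emph{same} $p$ in both roles, and at scale $\frac1L$ the rescaled set only covers each point $\#B=L/p=:k$ times, not $L$ times. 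The paper must first regroup the decomposition as $\Omega=\bigcup_{B}\left(pT_B+\frac1kB\right)$ with $\bigcup_B pT_B=[0,\frac1k)$ and then actually verify that $\Omega$ is a $k$-tile by $\frac1k{\mathbb Z}$ (a short counting argument) before Proposition \ref{prop2.3} is available; your sketch omits both the regrouping and the $k$-tile verification.

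In part (ii) you also misplace where rationality is used: you say a rational spectrum is what provides ``a finite minimal period $p$,'' but Theorem \ref{th2.2} already gives an integer period for any spectrum of a measure-one set, rational or not, so the $p$-tile decomposition is unconditional. What rationality actually buys is that the representatives $\Gamma\subset[0,p)$ can be written as $\frac1LA$ with $A\subset{\mathbb Z}$, so that the fibers $\Omega_x$ become spectral subsets of the finite group ${\mathbb Z}_{pL}$ --- not ${\mathbb Z}_p$ as you write --- and only then does {\bf (UTC(${\mathbb Z}_n$))} apply; without it the fibers still share the spectrum $\Gamma$ by Proposition \ref{prop2.3}, but there is no finite cyclic group in which to run the universal tiling hypothesis. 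A smaller point: in (i) one cannot ``pass freely'' from {\bf (T-S(${\mathbb Z}$))} back to {\bf (T-S(${\mathbb R}$))} via Proposition \ref{prop2.4}, since a general bounded tile of ${\mathbb R}$ is not of the form $A+[0,1)$; the loop must be closed as the paper does, by {\bf (T-S(${\mathbb Z}$))}$\Rightarrow${\bf (T-S(${\mathbb Z}_n$))} via Theorem \ref{th5.1}. With these corrections your argument coincides with the paper's.
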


\begin{proof}
(i) The first equivalence has been mentioned. We first show {\bf (USC(${\mathbb Z}_n$))}  $\Rightarrow$ {\bf (T-S(${\mathbb R}$))}. It suffices to show that a tile of measure 1 is a spectral set. Let $\Omega$ be a bounded measurable tile $|\Omega|=1$, with tiling set $\mathcal T$. By Theorem \ref{th2.1}, the tiling set is periodic, with period lattice $p\bz$ for some $p\in\bn$. We rescale $\Omega$ so that $\frac1p\Omega$ tiles with $\frac1p\mathcal T$ and period lattice $\bz$. By Theorem \ref{th2.1} we have that $\frac1p\Omega$ and $\frac1p\mathcal T$ have the following form
$$\frac1p\Omega=\bigcup_{B\in\mathcal B}(T_B+\frac1L B),\quad \bigcup_{B\in\mathcal B}T_B=[0,\frac 1L)\quad \mbox{(disjoint union)},$$
$$\frac1p\mathcal T=\frac1L A+\bz,\quad A\oplus B=\bz_L\mbox{ for all }B\in\mathcal B.$$
Since $|\Omega|=1$, we have that $\#A=p$ and therefore $p$ divides $L$, $\#B=\frac Lp=:k\in\bz$.

We have
$$\Omega=\bigcup_{B\in\mathcal B}\left(pT_B+\frac{p}{L}B\right)=\bigcup_{B\in\mathcal B}\left(pT_B+\frac{1}{k}B\right),\quad \bigcup_{B\in\mathcal B}pT_B=[0,\frac pL)=[0,\frac1k),\quad \mathcal T=\frac1k A+p\bz.$$

We now apply the {\bf (USC(${\mathbb Z}_n$))} on $B\in{\mathcal B}$ and $n=L$ (as the sets $B$ have a common tiling set $A$ in ${\mathbb Z}_L$), the sets $B$ with  $B\in\mathcal B$ have a common spectrum $\frac 1L\Gamma$ with $\Gamma\subset{\mathbb Z}_L$. Hence, $\frac{1}{k} B$ have a common spectrum $\frac{1}{kL}\Gamma$.

We now claim that the set $\Omega$ is a $k$-tile with $\frac1k\bz$. Then {\bf (T-S(${\mathbb R}$))} follows from Proposition \ref{prop2.3} and $\Omega$ has a spectrum $\frac{1}{kL}\Gamma+k{\mathbb Z}$.  To justify the claim, we first note that for any $x\in\br$ there exists  unique $y\in[0,\frac1k)$, $m\in\bz$ such that $x=y+\frac mk$. Then there is a unique $B\in\mathcal B$ such that $y\in pT_B$. Then $y+pb\in\Omega$ for all $b\in B$, so $x=(y+pb)+(\frac mk-pb)$. Thus $x$ belongs to at least $|B|=k$ translates of $\Omega$ from $\frac1k\bz$. If $x=z+\frac rk$ with $z\in\Omega$ and $r\in\bz$ then $z=y+pb$ for some $B\in\mathcal B$, $y\in pT_B$ and $y$ is uniquely determined in $[0,\frac1k)$ so $B$ is uniquely determined in $\mathcal B$. So $x=(y+pb)+\frac {m'}k$ as in the decomposition above. Thus $\Omega$ $k$-tiles by $\frac1k\bz$.

\medskip

 {\bf (T-S($\br$))} implies {\bf (T-S($\bz$))} with Proposition \ref{prop2.4}. Assume now {\bf (T-S(${\mathbb Z}$))} and take $A$ a tile in ${\mathbb Z}_n$. Assume on the contrary $A$ is not spectral in ${\mathbb Z}_n$, then Theorem \ref{th5.1}(i) implies that we can find some $k>1$ such that $A+n\{0,\cdots,k-1\}+[0,1)$ is not spectral on ${\mathbb R}^1$, so $A+n\{0,\cdots,k-1\}$ is not spectral in $\bz$ (by Proposition \ref{prop2.4}). However, $A+n\{0,\cdots,k-1\}$ is still a tile on ${\mathbb Z}$ by Theorem \ref{th5.1}(ii). This contradicts {\bf (T-S(${\mathbb Z}$))} and this shows {\bf (T-S(${\mathbb Z}_n$))}. This completes the proof of (i).

\medskip

(ii) The first equivalence was mentioned above. {\bf (S-T($\br$))} implies {\bf (S-T($\bz$))} with Proposition \ref{prop2.4}. Assume {\bf (S-T(${\mathbb Z}$))} and take a spectral set $A$ in ${\mathbb Z}_n$. We can prove in the same way as in (i) using Theorem \ref{th5.1} that {\bf (S-T(${\mathbb Z}_n$))} has to be true.
Finally, assume that every bounded spectral set of measure 1 has a rational spectrum. It suffices to show {\bf (S-T(${\mathbb R}$))} for $|\Omega|=1$. Take $\Omega$ spectral on ${\mathbb R}$ with spectrum $\Gamma+pZ$, by rationality,
we may write $\Gamma = \frac1L A$, with $A$ is a finite subset of integers and $L$
 a positive integer. Then Proposition \ref{prop2.3} implies $\frac1L A$ is a spectrum for a.e. $1/p\Omega_x$. Hence,
 $A$ has a spectrum $\frac{1}{pL} \Omega_x$ in $\bz_{pL}$. By the {\bf (UTC(${\mathbb Z}_n$))}, the sets $\Omega_x$ have a common tiling set in $\bz_{pL}$ for almost all $x$ and
 therefore $\Omega_x$ have a common tiling set in ${\mathbb Z}$. By the last statement in Proposition \ref{prop2.3}, $\Omega$ has a tiling set.
\end{proof}

\medskip

 We now  prove Theorem \ref{th0.0}(iii) and (iv). (iv) can be found in \cite{DuJo12b} except that the universal tiling conjecture in formulated in the following form.

\vspace{0.2cm}

{\bf (UTC($p$))}. Let $p\in{\mathbb N}$. Let $\Gamma$ be a subset of ${\mathbb R}$ which has $p$ elements and let $K>0$. If $\Gamma$ has a spectrum of the form $\frac1p A$ with $A\subset{\mathbb Z}$, $\max|A|\leq K$. Then there exists a tiling set ${\mathcal T}\subset{\mathbb Z}$ such that any spectrum of $\Gamma$ of the form $\frac1p A'$ with $A'\subset{\mathbb Z}$, $\max|A'|\leq K$ satisfies $A'\oplus {\mathcal T} = {\mathbb Z}$.

\vspace{0.2cm}

In \cite{DuJo12b} it is proved that {\bf (Strong (S-T(${\mathbb Z}$))} $\Longleftrightarrow$ {\bf (UTC($p$))} for all $p$ $\Longleftrightarrow$ {\bf (S-T(${\mathbb R}$))}. Therefore Theorem \ref{th0.0}(iv) will follow if we establish the following proposition.

\begin{proposition}\label{prop3.1}
{\bf (UTC($p$))} for all $p$ is equivalent to {\bf (UTC(${\mathbb Z}$))}.
\end{proposition}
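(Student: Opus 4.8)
The plan is to prove both implications by exploiting the symmetry of finite spectral pairs together with a rescaling of the \emph{base} set, matching the boundedness parameter $K$ with the finiteness of the family $\mathcal{A}$. The basic tool is the elementary duality for finite sets: if $A$ and $\Lambda$ are finite subsets of $\mathbb{R}$ with $\#A=\#\Lambda=p$, then $\{e^{2\pi i\lambda x}\}_{\lambda\in\Lambda}$ is an orthogonal basis of $\ell^2(A)$ if and only if $\{e^{2\pi i a y}\}_{a\in A}$ is an orthogonal basis of $\ell^2(\Lambda)$, since both assertions say that the $p\times p$ matrix $\frac{1}{\sqrt p}(e^{2\pi i a\lambda})_{a\in A,\lambda\in\Lambda}$ is unitary. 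I will combine this with two trivial observations: (a) $\Lambda$ is a spectrum of $\Omega$ iff $\frac1c\Lambda$ is a spectrum of $c\Omega$ for $c>0$; and (b) if $A\subset\mathbb{Z}$ then any real spectrum $\Lambda$ of $A$ reduces modulo $1$ to a spectrum $\Lambda\bmod 1\subset\mathbb{T}$ of the same cardinality, because the entries $e^{2\pi i a\lambda}$ depend only on $\lambda\bmod 1$ and orthogonality forbids two elements of $\Lambda$ from collapsing to the same residue.

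For the implication {\bf (UTC(${\mathbb Z}$))} $\Rightarrow$ {\bf (UTC($p$))}, I would fix $\Gamma\subset\mathbb{R}$ with $p$ elements and a bound $K$, and let $\mathcal{A}$ be the collection of all $A'\subset\mathbb{Z}$ with $\max|A'|\le K$ for which $\frac1p A'$ is a spectrum of $\Gamma$. Each such $A'$ lies in $[-K,K]\cap\mathbb{Z}$ and has exactly $p$ elements, so $\mathcal{A}$ is finite. By (a) with $c=\frac1p$, each $A'\in\mathcal{A}$ is a spectrum of $\frac1p\Gamma$; by duality $\frac1p\Gamma$ is a spectrum of $A'$; and by (b) the fixed set $\frac1p\Gamma\bmod 1\subset\mathbb{T}$ is a common spectrum for every $A'\in\mathcal{A}$. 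Applying {\bf (UTC(${\mathbb Z}$))} to the finite family $\mathcal{A}$ produces a common tiling set $\mathcal{T}\subset\mathbb{Z}$, which is exactly the conclusion demanded by {\bf (UTC($p$))}.

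For the converse, {\bf (UTC($p$))} for all $p$ $\Rightarrow$ {\bf (UTC(${\mathbb Z}$))}, I would take a finite family $\mathcal{A}$ of finite subsets of $\mathbb{Z}$ admitting a common spectrum $\Gamma_0\subset\mathbb{T}$, and set $p=\#\Gamma_0=\#A$ (the same for every $A\in\mathcal{A}$, since a common spectrum forces a common cardinality). By duality each $A\in\mathcal{A}$ is a spectrum of $\Gamma_0$, and by (a) with $c=p$ the set $\frac1p A$ is a spectrum of $\Gamma:=p\Gamma_0\subset\mathbb{R}$, a fixed set of $p$ elements. With $K:=\max_{A\in\mathcal{A}}\max|A|$, every $A\in\mathcal{A}$ is then a set $A'\subset\mathbb{Z}$ with $\max|A'|\le K$ whose rescaling $\frac1p A'$ is a spectrum of $\Gamma$, so {\bf (UTC($p$))} yields a single $\mathcal{T}\subset\mathbb{Z}$ with $A\oplus\mathcal{T}=\mathbb{Z}$ for all of them; this is the common tiling set required by {\bf (UTC(${\mathbb Z}$))}.

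The one point needing genuine care — and the main thing that could go wrong — is the rescaling bookkeeping in the converse: the naive move of replacing $A$ by $pA$ would force the hypothesis of {\bf (UTC($p$))} but deliver only a tiling of $\frac1p\mathbb{Z}$ rather than of $\mathbb{Z}$. Rescaling the base set $\Gamma_0\mapsto p\Gamma_0$ instead of the integer sets $A$ is precisely what keeps the conclusion $A\oplus\mathcal{T}=\mathbb{Z}$ intact. Beyond this, I only need to confirm that both the duality and the mod-$1$ reduction preserve cardinalities (so that $\frac1p\Gamma\bmod 1$ is genuinely a spectrum of the correct size) and that the parameter $K$ corresponds to the finiteness of $\mathcal{A}$; these are routine verifications.
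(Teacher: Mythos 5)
Your proof is correct and follows essentially the same route as the paper's: in both directions you pass between "$\frac1p A$ is a spectrum of $\Gamma$" and "$\frac1p\Gamma$ (mod $1$) is a common spectrum of the $A$'s" via rescaling and the finite duality, using the bound $K$ to make the family finite in one direction and $p\Gamma_0$ as the base set in the other. The extra care you take with the duality, the mod-$1$ reduction, and the $\Gamma_0\mapsto p\Gamma_0$ bookkeeping only makes explicit what the paper leaves implicit.
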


\begin{proof}
Assume {\bf (UTC($p$))} holds. Let ${\mathcal A}$ be a finite collections of sets in ${\mathbb Z}$ with common spectrum $\Gamma$. Let $p = \#\Gamma$, then $p\Gamma$ is a spectral set and $\frac 1p A$ are spectra of $\Gamma$ for all $A\in{\mathcal A}$. By the {\bf (UTC($p$))}, the sets $A$ have a common tiling set ${\mathcal T}$ in ${\mathbb Z}$. This shows {\bf (UTC(${\mathbb Z}$))}.

\medskip

Conversely, let $p$, $\Gamma$ and $K$ be given. We consider ${\mathcal A} = \{A\subset {\mathbb Z}: \max|A|\leq K, \frac{1}{p} A \ \mbox{is a spectrum of} \ \Gamma \}$. It is clearly a finite collection of sets. Moreover, all $A\in{\mathcal A}$ have a common spectrum $\frac1p \Gamma$. Therefore, by {\bf (UTC(${\mathbb Z}$))}, the sets $A$ have a common tiling set ${\mathcal T}$ in ${\mathbb Z}$.
\end{proof}

\medskip

We now prove Theorem \ref{th0.0}(iv). As in \cite{DuJo12b}, we add one more equivalent statements.

\begin{theorem}\label{th1.6}
The following statements are equivalent
\begin{enumerate}
	\item {\bf (Strong T-S($\mathbb Z$)).} For every set $\Omega$ of the form $\Omega=A+[0,1)$ with $A\subset\bz$, $A$ finite, if $\Omega$ tiles $\br$ with a tiling set $\mathcal T\subset k\bz$, $k\in\bn$, then it has a spectrum of period $\frac1k$.
	\item For every set $\Omega$ of the form $\Omega=\cup_{i=1}^n(\alpha_i,\beta_i)$ with $\alpha_i,\beta_i$ rational and $|\Omega|=1$, if $\Omega$ tiles $\br$ with a tiling set $\mathcal T\subset \frac1p\bz$, $p\in\bn$, then it has a spectrum of period $p$.
	\item {\bf (USC(${\mathbb Z}$))}. If $\mathcal B$ is a finite family of sets in $\bz$ with the property that there exists $A\subset \bz$ such that $A\oplus B=\bz$ for all $B\in\mathcal B$, then the sets $B\in\mathcal B$ have a common spectrum $\Gamma \subset[0,1)$.
	\item {\bf (T-S(${\mathbb R}$))}. Every bounded measurable tile $\Omega$ in $\br$ is spectral.
\end{enumerate}
\end{theorem}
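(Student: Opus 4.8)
The plan is to prove the four statements equivalent by running the cycle
$$(4)\Longrightarrow(3)\Longrightarrow(1)\Longrightarrow(2)\Longrightarrow(4),$$
so that a single loop ties everything together; the link $(1)\Leftrightarrow(2)$ is in any case reversible by the same scaling computation. Throughout I would carry two bookkeeping facts: rescaling a set by a factor $c$ rescales every spectrum by $1/c$ (so periods scale by $1/c$), and a coverage count of the kind already performed in the proof of Theorem \ref{th5.2}(i) converts the hypothesis ``tiles with tiling set in $\frac1p\bz$ and has measure $1$'' into the hypothesis ``$p$-tile by $\frac1p\bz$'' needed to invoke Proposition \ref{prop2.3}.

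For $(1)\Rightarrow(2)$ I would clear denominators. Given $\Omega=\cup_i(\alpha_i,\beta_i)$ with rational endpoints, $|\Omega|=1$, tiling with $\mathcal T\subset\frac1p\bz$, I choose a common denominator $N$ of all endpoints that is also a multiple of $p$; then $N\Omega=A+[0,1)$ with $A\subset\bz$ and $N\mathcal T\subset\frac Np\bz=k\bz$, $k=N/p\in\bn$. Statement $(1)$ gives a spectrum of period $1/k$, and scaling back by $1/N$ yields a spectrum of $\Omega$ of period $N/k=p$; the converse is the same computation read backwards, using Theorem \ref{th2.3}(iii) to see $k\mid\#A$ so that the reduced period is an integer. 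For $(3)\Rightarrow(1)$, given $\Omega=A+[0,1)$ tiling with $\mathcal T\subset k\bz$ I pass to the integer tiling $A\oplus\mathcal T=\bz$ and apply Theorem \ref{th2.3}: the pieces $A_0,\dots,A_{k-1}$ all tile $\bz$ with the common complement $\frac1k\mathcal T$, so \textbf{(USC($\bz$))} furnishes a common spectrum $\Gamma$, $0\in\Gamma$. A direct coverage count, using the equidistribution $\#A_i=\#A/k$, shows $\frac1{\#A}\Omega$ is a $p$-tile by $\frac1p\bz$ with $p=\#A/k$ whose fibers are translates of the $A_i$; since a common spectrum is translation invariant, Proposition \ref{prop2.3} assembles $p\Gamma+p\bz$ into a spectrum of period $p$, which scales back to a spectrum of $\Omega$ of period $1/k$.

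The substantive links are $(4)\Rightarrow(3)$ and $(2)\Rightarrow(4)$. For $(4)\Rightarrow(3)$ I would not build a tile by hand but instead use the chain \textbf{(T-S($\br$))} $\Leftrightarrow$ \textbf{(USC($\bz_n$))} from Theorem \ref{th5.2}(i) and prove \textbf{(USC($\bz_n$))} $\Rightarrow$ \textbf{(USC($\bz$))}: given $\{B_j\}$ with a common complement $A$, the set $A$ is periodic (a finite set tiling $\bz$ forces a periodic complement), so reducing modulo a period $n$ gives $\bar A\oplus\bar B_j=\bz_n$ with the $B_j$ staying injective mod $n$, and a common spectrum in $\bz_n$ then pulls back verbatim to a common spectrum $\Gamma\subset[0,1)$ in $\bz$. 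For $(2)\Rightarrow(4)$, given an arbitrary bounded measurable tile I normalize the minimal period to $1$, apply the Lagarias--Wang structure theorem (Theorem \ref{th2.1}), and scale to measure $1$ to obtain a $p$-tile $\hat\Omega$ by $\frac1p\bz$ whose fibers are the sets $B$ with $A\oplus B=\bz_L$. Here is the key point and the main obstacle: spectrality of $\hat\Omega$ depends, through Proposition \ref{prop2.3}, only on the finite combinatorial fact that the fiber sets $B$ admit a \emph{common} spectrum, not on the shapes of the measurable base pieces. I would therefore replace those pieces by rational intervals, forming the surrogate $\Omega^\sharp=\bigcup_B\left(I_B+\frac1p B\right)$ with $I_B$ rational subintervals partitioning $[0,\frac1p)$; lifting $A$ to $\hat A=A+L\bz$ makes $\Omega^\sharp$ a measure-$1$ rational-endpoint tile with tiling set $\frac1p\hat A\subset\frac1p\bz$. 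Statement $(2)$ gives $\Omega^\sharp$ a spectrum of period $p$, Proposition \ref{prop2.3} extracts a common spectrum of the fibers $\frac1p B$, and feeding it back through Proposition \ref{prop2.3} makes $\hat\Omega$, hence the original tile, spectral. The delicate part throughout is the period/measure bookkeeping that keeps the multiplicity $p$, the lattice $\frac1p\bz$, and the fiber sets aligned between the genuine tile and its interval-valued surrogate.
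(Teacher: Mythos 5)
Your proposal is correct, but it reorganizes the logic into a single cycle $(4)\Rightarrow(3)\Rightarrow(1)\Rightarrow(2)\Rightarrow(4)$, whereas the paper proves $(1)\Rightarrow(2)\Rightarrow(3)\Rightarrow(1)$ and then $(3)\Leftrightarrow(4)$ separately. Two of your links coincide with the paper's: your $(4)\Rightarrow(3)$ is exactly the paper's $(iv)\Rightarrow(iii)$ (route through {\bf (USC(${\mathbb Z}_n$))} via Theorem \ref{th5.2} and periodize the complement), and your $(1)\Rightarrow(2)$ is the paper's denominator-clearing argument verbatim. The genuine divergences are the other two links. For $(3)\Rightarrow(1)$ the paper, after invoking Theorem \ref{th2.3} to get the pieces $A_0,\dots,A_{k-1}$ and their common spectrum $\Gamma'$, assembles $\Gamma=\frac1k\Gamma'\oplus\{0,\frac1k,\dots,\frac{k-1}{k}\}$ and verifies orthogonality by a direct computation with the mask-polynomial identity $A(x)=x^{a_0}A_0(x^k)+\cdots+x^{a_{k-1}}A_{k-1}(x^k)$, using equidistribution and the fact that the $a_i$ form a complete residue system mod $k$; you instead observe that $\frac{1}{\#A}\Omega$ is a $(\#A/k)$-tile by $\frac{k}{\#A}\bz$ whose fibers are translates of the $A_i$ and let Proposition \ref{prop2.3} do the orthogonality-and-completeness work. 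Your route is more uniform (everything funnels through Proposition \ref{prop2.3}) and avoids the polynomial computation, at the cost of the fiber identification and the measure/period bookkeeping you rightly flag; the paper's computation is more explicit about why the resulting spectrum has period exactly $\frac1k$. For $(2)\Rightarrow(4)$ you fuse the paper's $(ii)\Rightarrow(iii)$ (the rational-interval surrogate $\Omega^\sharp$ built from the Lagarias--Wang fiber sets) with its $(iii)\Rightarrow(iv)$ into one step, which is logically sound since, as you note, Proposition \ref{prop2.3} makes spectrality of a $p$-tile depend only on the fiber sets and not on the shape of the base pieces. Both organizations establish the theorem; the paper's has the advantage that {\bf (USC(${\mathbb Z}$))} is isolated as the pivot to which everything else is reduced.
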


\begin{proof}
We first prove (i)$\Longrightarrow$(ii)$\Longrightarrow$(iii)$\Longrightarrow$(i) and then (iii)$\Longleftrightarrow$(iv).


\medskip

(i)$\Rightarrow$(ii). Take $\Omega,p$ and $\mathcal T$ as in (ii). We can assume $0=\alpha_1<\beta_1<\alpha_2<\dots<\alpha_n<\beta_n$. Let $N$ be a common denominator for all the rational numbers $\alpha_i,\beta_i$ which is also a multiple of $p$. Then $N\Omega$ is of the form $A+[0,1)$ with $A\subset\bz$ and $N\Omega$ tiles by $N\mathcal T$ which is a subset of $\frac{N}p\bz$. By the hypothesis, $N\Omega$ has a spectrum of period $\frac pN$. Therefore $\Omega$ has a spectrum of period $p$.

%

\medskip

(ii)$\Rightarrow$(iii). Let $\mathcal B=\{B_1,\dots,B_n\}$, $|B|=p$, $|A|=L/p=:k$. Take a rational partition $0=r_0<r_1<\dots<r_n=\frac1p$. Define
$$\Omega:=\bigcup_{i=1}^n\left((r_i,r_{i+1})+\frac1pB_i\right).$$
Then $\Omega$ tiles $\br$ with $\frac1p(A\oplus L\bz)$ because $\frac1p(B_i\oplus A\oplus L\bz)=\frac1p\bz$ and $\cup_i(r_i,r_{i+1})=[0,\frac1p)$ (of course, we ignore measure zero sets).
The hypothesis implies that $\Omega$ has a spectrum $\Lambda$ of period $p$ and we can assume $0\in\Lambda$. Then $\Lambda=\Gamma+p\bz$ where $\Gamma=\Lambda\cap[0,p)$. Note also that $\Omega$ is a $p$-tile by $\frac1p\bz$ and the sets $B_i$ appear as the sets $\Omega_x$ in Proposition \ref{prop2.3}. It  follows from  Proposition \ref{prop2.3} that $\Gamma$ is a spectrum for all the sets $\frac1p B_i$. Hence, the sets $B_i$ have a common spectrum $\frac1p\Gamma$.

\medskip

(iii)$\Rightarrow$(i). Let $\Omega = A+[0,1)$ as in (i). We may assume all elements in $A$ are non-negative. If $k=1$, then we consider the finite collection ${\mathcal B} = \{A\}$ as in (iii). Since tilings on ${\mathbb R}^1$ are periodic, we can find $A'$ and $L$ such that $A'\oplus A = {\mathbb Z}_L$. The hypothesis implies that $A$ is spectral with some spectrum $\Gamma$. Hence, $\Omega$ is spectral with spectrum $\Gamma +{\mathbb Z}$. This is clearly of period 1.

If now $k>1$, then $A$ has a tiling set ${\mathcal T}\subset k{\mathbb Z}$. By Theorem \ref{th2.3}(i), $A(x)$ can be written as
\begin{equation}\label{eq3.1}
A(x) = x^{a_0}A_0(x^{k})+\cdots x^{a_{k-1}}A_{k-1}(x^{k}).
\end{equation}
Consider the finite collections of sets ${\mathcal B} = \{A_0,\cdots,A_{k-1}\}$,  Theorem \ref{th2.3}(ii) implies that $A_i$ tiles with a common tiling set $\frac1k{\mathcal T}$ and hence the hypothesis tells us that there exists a common spectrum $\Gamma'\subset[0,1)$ for $A_i$. Let $\Gamma = \frac1k\Gamma'\oplus\{0,\frac{1}{k},\cdots \frac{k-1}{k}\}$ and we claim that $\Lambda = \Gamma+{\mathbb Z}$ is a spectrum of period $\frac1k$ of  $A+[0,1)$. First, it is clear that it has period $\frac1k$ since $\frac1k \gamma+\frac{j}{k} +\frac1k= \frac1k \gamma+\frac{j+1}{k}\in \Lambda$. To prove it is a spectrum,  it suffices to show $\Gamma$ is a spectrum of $A$ by Proposition \ref{prop2.4}. Note that elements of $\frac1k \Gamma'\subset[0,\frac1k)$, so elements in $\Gamma$ are all distinct and $\#\Gamma = (\#A_i)k = \#A$. Take distinct elements $y_1 := \frac1k \gamma+\frac{j}{k},y_2:=\frac1k \gamma'+\frac{j'}{k}$ in $\Gamma$, if $\gamma\neq\gamma'$, then
$$
A_i(e^{2\pi i (y_1-y_2)k}) = A_i(e^{2\pi i (\gamma-\gamma')}) =0
$$
because $\Gamma'$ is a spectrum for all $A_i$. (\ref{eq3.1}) implies $A(e^{2\pi i (y_1-y_2)})=0$. If $\gamma=\gamma'$, then $j\neq j'$ and then $y_1-y_2 = \frac{j-j'}{k}$, and with (\ref{eq3.1}), since all $\#A_i$ are equal (by Theorem \ref{th2.3}(iii)) and $a_i$ are complete residue (mod k), we have that
$$
A(e^{2\pi i (y_1-y_2)}) = \#(A_0)\left(e^{2\pi i (\frac{j-j'}{k})a_0}+\cdots+e^{2\pi i (\frac{j-j'}{k})a_{k-1}}\right) =0.
$$
This shows $\Gamma$ is a spectrum of $A$.

\medskip

(iii)$\Rightarrow$(iv). The proof is identical to the proof of {\bf (USC(${\mathbb Z}_n$))}  $\Rightarrow$ {\bf (T-S(${\mathbb R}$))} in  Theorem \ref{th5.2}.

\medskip

(iv)$\Rightarrow$(iii). By Theorem \ref{th5.2}, we know that {\bf (USC(${\mathbb Z}_n$))} holds. Let ${\mathcal B}$ be a finite collection of sets such that  there exists $A$ so that $A\oplus B = {\mathbb Z}$ for all $B\in{\mathcal B}$. Since tiling sets must be periodic, so $A = A'\oplus L{\mathbb Z}$ and hence $A'\oplus B = {\mathbb Z}_L$. From {\bf (USC(${\mathbb Z}_n$))}, there exists a common spectrum of $B$ in ${\frac{1}{n}{\mathbb Z}_L}$ of which we can think  as inside ${\mathbb T}$. Hence {\bf (USC(${\mathbb Z}$))} holds.
\end{proof}

%
%

\medskip

Now, we prove that rationality of the spectrum is a consequence of the Fuglede conjecture.

\begin{theorem}
Suppose the Fuglede conjecture is true (i.e. both {\bf (S-T(${\mathbb R}$))} and {\bf (T-S(${\mathbb R}$))} holds), then any bounded spectral sets $\Omega$ of $|\Omega|=1$ will have a rational spectrum.
\end{theorem}

\begin{proof}
Let $\Omega$ be a spectral set of $|\Omega|=1$. Let $\Lambda = \Gamma+p{\mathbb Z}$ with $p\in{\mathbb N}$ and $\#\Gamma=p$ be one of its spectra. We can decompose as in (\ref{eq2.1})
$$
\Omega = \bigcup_{|S|=p}\left(A_S+\frac{S}{p}\right), \ \bigcup_{|S|=p}A_S = [0,\frac{1}{p}).
$$
As $\Omega$ is bounded, there are finitely many $S$ such that $|A_S|>0$. By Proposition \ref{prop2.3}, the sets $\frac{1}{p}S$ ($S$ must be one of the $\Omega_x$) have a common spectrum $\Gamma$. Since  {\bf (S-T(${\mathbb R}$))} holds, we must have {\bf (UTC(${\mathbb Z}$))} by Theorem \ref{th0.0}(iii). Taking the finite family ${\mathcal A}$ to be all $S$ above, we have a common tiling set $B'$ for $S$. Hence, $S\oplus B' = {\mathbb Z}$. But any tiling on dimension 1 must be periodic, so there exists $L>0$ such that $S\oplus B = {\mathbb Z}_L$ where $B' = B\oplus L{\mathbb Z}$. Now, {\bf (T-S(${\mathbb R}$))} holds, Theorem \ref{th5.2} implies that {\bf (USC(${\mathbb Z}_n$))} holds. Therefore, all these $S$ must have a common spectrum $\widetilde{\Gamma}$ in ${\mathbb Z}_L$ and $\frac{1}{p}S$ has a common spectrum $\frac{1}{p}\widetilde{\Gamma}$. This spectrum is rational. By Proposition \ref{prop2.3} again, $\Omega$ will have a rational spectrum.
\end{proof}

%
%
%

\medskip

\medskip

\section{Coven-Meyerowitz property}

In this section, we describe how checking the CM-property is sufficient for a proof of the Fuglede conjecture. Note that with the CM-property, the tiling set and the spectrum can be explicitly written.

\begin{definition}\label{def1.1}
Let $A$ be a finite subset of $\bz$ satisfying the CM-property as in Definition \ref{def0.1}.
We call the set
\begin{equation}
\Gamma_{\textup{\L}}:=\left\{\sum_{s\in S_A}\frac{k_s}{s} : s=p^\alpha, k_s\in\{0,\dots,p-1\}\right\}
\label{eqls1}
\end{equation}
the {\it \L aba spectrum of $A$} and

\begin{equation}
\Lambda_{\textup{\L}}:=\Gamma_{\textup{\L}}+\bz
\label{eqls2}
\end{equation}
the {\it \L aba spectrum of $\Omega=A+[0,1)$}. (It was proved in \cite{Lab02} that $\Gamma_{\Laba}$ is a spectrum for $A$ and, with Proposition \ref{prop2.4}, $\Lambda_{\Laba}$ is a spectrum for $\Omega$).

We denote by $M $ the lowest common multiple (lcm) of the numbers in $S_A$. Let $M =p_1^{r_1}\dots p_m^{r_m}$ be the prime factor decomposition of $M$. For each $j=1,\dots,m$ define
$$\alpha_j:=\left\{
              \begin{array}{ll}
                \max\{i: p_j,p_j^2,\dots,p_j^i\in S_A\}, & \hbox{$p_j\in S_A$;} \\
                0, & \hbox{$p_j\not\in S_A$.}
              \end{array}
            \right.
$$
and
\begin{equation}
\frac 1{p_{\textup{\L}}}:=\frac1{p_1^{\alpha_1}\dots p_m^{\alpha_m}},
\label{eqlp1}
\end{equation}
the {\it \L aba period} of the spectrum $\Lambda_{\textup{\L}}$.
\end{definition}

\medskip

\begin{definition}\label{def1.2}
Let $A$ be a finite subset of $\bz$ that satisfies the CM-property. Define the set $B\subset \bz$ by the associated polynomial
\begin{equation}
B(x)=\prod \Phi_s(x^{t(s)})
\label{eqcmt}
\end{equation}
where the product is taken over all the prime powers $s$ which are factors of $M=\lcm(A)$ and $s\not\in S_A$ and $t(s)$ is the largest factor of $M$ which is prime with $s$. We call the set
\begin{equation}
\mathcal T_{CM}:=B\oplus M\bz
\label{eqcmt2}
\end{equation}
the {\it Coven-Meyerowitz tiling set} for $A$. (It was proved in \cite{CoMe99} that $\T_{CM}$ is a tiling set for $A$).
\end{definition}

\medskip

We will now prove Theorem \ref{th0.2}. We will split the proof into several parts. The following proposition does not assume any CM-property, but it tells us that $p_{\Laba}$ plays a special role for the spectral set.

\begin{proposition}\label{lem1.4}
Let $\Lambda$ be a spectrum for $\Omega=A+[0,1)$, $0\in\Lambda$. Then the minimal period $p_{\min}$ of $\Lambda$ is of the form $p_{\min}=\frac1P$ where $P$ is a divisor of $p_\Laba$.
\end{proposition}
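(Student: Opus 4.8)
The plan is to first fix the shape $p_{\min}=\frac1P$ and then to read off, purely from the single relation ``$\frac1P$ is a period of $\Lambda$'', enough cyclotomic-divisibility information about the mask polynomial $A(x)$ to force $P\mid p_{\Laba}$. Throughout I would work with the mask polynomial and its vanishing at roots of unity, so the whole argument is elementary Fourier analysis plus a bit of cyclotomic bookkeeping.

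\emph{Step 1 (the form of $p_{\min}$).} First I would invoke Proposition~\ref{prop2.4}: since $\Omega=A+[0,1)$, the number $1$ is a period of $\Lambda$, so the period lattice $p_{\min}\bz$ contains $1$, whence $p_{\min}$ divides $1$ and $p_{\min}=\frac1P$ for some $P\in\bn$. If $P=1$ there is nothing to prove, so I assume $P>1$. I would also translate $A$ so that it consists of nonnegative integers; an integer translation only multiplies each exponential by a unimodular constant, changing neither the spectrum nor $\mathcal{S}_A$ nor $p_{\Laba}$, so this is harmless.

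\emph{Step 2 (periodicity $\Rightarrow$ cyclotomic factors).} Because $\frac1P$ is a period and $0\in\Lambda$, every $\frac kP$ with $k\in\bz$ lies in $\Lambda$, hence in $\Lambda-\Lambda$. Orthogonality of the spectrum then gives, for each $k$ with $P\nmid k$,
$$0=\int_\Omega e^{2\pi i \frac kP x}\,dx = A\!\left(e^{2\pi i k/P}\right)\cdot\frac{e^{2\pi i k/P}-1}{2\pi i\, k/P},$$
and since $e^{2\pi i k/P}\neq1$ this forces $A\!\left(e^{2\pi i k/P}\right)=0$. Thus $A(x)$ vanishes at every $P$-th root of unity other than $1$. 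Grouping these roots by their exact order, for each divisor $s\mid P$ with $s>1$ a primitive $s$-th root of unity is a root of the integer polynomial $A(x)$, and since $\Phi_s$ is its minimal polynomial, Gauss's lemma yields $\Phi_s(x)\mid A(x)$. In particular every prime power $q^\beta\mid P$ with $\beta\geq1$ lies in $\mathcal{S}_A$.

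\emph{Step 3 (matching to $p_{\Laba}$).} Finally I would fix a prime $q$ with $q^\gamma\,\|\,P$. By Step~2, $q^\beta\in\mathcal{S}_A$ for \emph{every} $1\le\beta\le\gamma$, i.e. the entire consecutive chain $q,q^2,\dots,q^\gamma$ lies in $\mathcal{S}_A$; in particular $q=q^1\in\mathcal{S}_A$, so $q$ occurs as one of the primes $p_j$ in $M=\lcm(\mathcal{S}_A)$. By the definition of $\alpha_j$ as the length of the maximal chain $p_j,p_j^2,\dots$ contained in $\mathcal{S}_A$, this gives $\alpha_j\ge\gamma$, hence $q^\gamma\mid p_{\Laba}$. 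Since distinct primes contribute coprime factors, multiplying over all $q\mid P$ gives $P\mid p_{\Laba}$.

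The delicate point — and the step I expect to need the most care — is precisely the matching in Step~3: $p_{\Laba}$ is built from the \emph{consecutive} chains $p_j,\dots,p_j^{\alpha_j}$ inside $\mathcal{S}_A$, not from the maximal prime-power divisors (which define $M$). So it is not enough to know $q^\gamma\in\mathcal{S}_A$; one must verify that all the lower powers $q,\dots,q^{\gamma}$ belong to $\mathcal{S}_A$ as well, ruling out a ``gap'' such as $q^\gamma\in\mathcal{S}_A$ but $q\notin\mathcal{S}_A$. This is exactly what the vanishing of $A$ at \emph{all} $P$-th roots of unity supplies, since $q^\beta\mid P$ for every $\beta\le\gamma$, and it is why the orthogonality relation must be used simultaneously for every non-integer $\frac kP$ rather than for a single difference.
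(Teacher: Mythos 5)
Your proposal is correct and follows essentially the same route as the paper: both arguments reduce to showing that for every prime power $q^\beta$ dividing $P$ the point $\frac{1}{q^\beta}$ (or more generally $\frac kP$) lies in $\Lambda$, so orthogonality against $0\in\Lambda$ forces $A\bigl(e^{2\pi i/q^\beta}\bigr)=0$, hence $\Phi_{q^\beta}\mid A(x)$ and $q^\beta\in\mathcal S_A$, after which the definition of $\alpha_j$ gives $P\mid p_{\Laba}$. The only cosmetic difference is that you evaluate the mask polynomial at all $P$-th roots of unity and then restrict to prime-power orders, whereas the paper goes directly to the points $\frac1{q_j^\ell}$; the "no gaps in the chain" point you flag is handled identically in both versions.
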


\begin{proof}

Since 1 is a period of $\Lambda$ by Proposition \ref{prop2.4}, $1$ is in the period lattice $p_{\min}{\mathbb Z}$ as defined in the introduction and hence $p_{\min} = \frac1P$ for some positive integer $P$.

\medskip

 Let $P=q_1^{\beta_1}\dots q_r^{\beta_r}$ be the prime factor decomposition of $P$. For any $\ell\leq \beta_j$, the number $\frac1{q_j^\ell}$ is an integer multiple of $\frac1P$ and therefore it is a period of $\Lambda$, so $\frac1{q_j^\ell}$ is in $\Lambda$ since $0$ is. Note that the Fourier transform of $\chi_{\Omega}$ is given by
$$
\widehat{\chi_{\Omega}}(\xi)  = A(e^{2\pi i \xi})\widehat{\chi_{[0,1]}}(\xi) = A(e^{2\pi i \xi})e^{\pi i \xi}\frac{\sin(\pi \xi)}{\pi \xi}
$$
and $\frac1{q_j^\ell}$ is a zero for the Fourier transform (since $\Lambda$ is a spectrum), so we must have $A\left(e^{2\pi i/q_j^l}\right)=0$. But then the cyclotomic polynomial $\Phi_{q_j^\ell}$ has to divide $A(x)$ and this implies that $q_j=p_k$ for some $k$ and also, since this holds for all $\ell\leq \beta_j$, we have that $\beta_j\leq \alpha_k$. Thus $P$ is a divisor of $p_\Laba$.
\end{proof}

A corollary is also deduced which sheds some light on the rationality of the spectrum.

\begin{corollary}
Suppose there is no cyclotomic polynomial factor in $A(x)$, then the minimal period any spectrum has to be 1.
\end{corollary}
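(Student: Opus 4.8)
The plan is to deduce this immediately from Proposition \ref{lem1.4}, once we observe that the hypothesis forces the \L aba period to collapse to $1$. First I would reduce to the normalization $0\in\Lambda$: periods are invariant under translation of $\Lambda$ (if $\Lambda+a=\Lambda$ then $(\Lambda+c)+a=\Lambda+c$), so replacing $\Lambda$ by a translate containing $0$ changes neither the set of periods nor the minimal period. Thus there is no loss in assuming $0\in\Lambda$, which is exactly the hypothesis under which Proposition \ref{lem1.4} is stated, and moreover it suffices to treat a single fixed spectrum since the argument is uniform in $\Lambda$.

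The key step is to compute ${\mathcal S}_A$ under the hypothesis. By assumption $A(x)$ has no cyclotomic factor $\Phi_s$ whatsoever; in particular no $\Phi_{p^\alpha}$ with $p$ prime and $\alpha\geq 1$ divides $A(x)$, so by Definition \ref{def0.1} the set ${\mathcal S}_A$ is empty. Consequently, in the notation of Definition \ref{def1.1}, $M=\lcm({\mathcal S}_A)=1$ is an empty lcm, it has no prime factors (so $m=0$), every $\alpha_j$ is vacuously $0$, and $p_\Laba=p_1^{\alpha_1}\cdots p_m^{\alpha_m}$ is an empty product equal to $1$.

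Now Proposition \ref{lem1.4} gives $p_{\min}=\frac1P$ with $P$ a divisor of $p_\Laba=1$, forcing $P=1$ and hence $p_{\min}=1$. I expect no genuine obstacle here: the only points needing a line of care are the empty-lcm and empty-product conventions used to read off $p_\Laba=1$ from ${\mathcal S}_A=\emptyset$, and the translation-invariance remark that legitimizes the $0\in\Lambda$ normalization. As a self-contained alternative that bypasses Definition \ref{def1.1} entirely, I could instead rerun the contradiction inside the proof of Proposition \ref{lem1.4}: if $p_{\min}=\frac1P$ with $P>1$, pick a prime $q\mid P$; then $\frac1q$ is an integer multiple of $\frac1P$, hence a period, hence lies in $\Lambda$ (since $0\in\Lambda$), hence is a zero of $\widehat{\chi_\Omega}$, giving $A(e^{2\pi i/q})=0$ and so $\Phi_q\mid A(x)$, contradicting the no-cyclotomic-factor hypothesis.
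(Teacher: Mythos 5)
Your proof is correct and follows essentially the same route as the paper's: the absence of cyclotomic factors gives ${\mathcal S}_A=\emptyset$, hence $p_{\textup{\L}}=1$ by the (empty-product) definition, and Proposition \ref{lem1.4} then forces $P=1$ and $p_{\min}=1$. The extra remarks on translating $\Lambda$ to contain $0$ and the alternative in-line contradiction are fine but not needed beyond what the paper's two-line argument already records.
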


\begin{proof}
If there is no cyclotomic polynomial factor in $A(x)$, then $p_\Laba =1$ by definition. The conclusion follows directly from Proposition \ref{lem1.4}.
\end{proof}
\medskip

The following lemmas describe the relation of the {\L}aba spectrum and also of the Coven-Meyerowitz tiling set to the {\L}aba period.

\begin{lemma}\label{lem1.1}
Let $p_1,\dots,p_m$ be some distinct prime numbers, $r_1,\dots,r_m$ some positive integers and $k(i,j),l(i,j)\in\{0,\dots,p_j-1\}$ for $i=1\dots m$, $j=1\dots r_j$. If
\begin{equation}
\sum_{j=1}^m\sum_{i=1}^{r_j}\frac{k(i,j)}{p_j^i}\equiv \sum_{j=1}^m\sum_{i=1}^{r_j}\frac{l(i,j)}{p_j^i} \ (\mod \bz),
\label{eq1.1.1}
\end{equation}
then $k(i,j)=l(i,j)$ for all $i,j$.
\end{lemma}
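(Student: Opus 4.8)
The plan is to pass to the differences $d(i,j):=k(i,j)-l(i,j)$, which satisfy $|d(i,j)|\le p_j-1$, and to reduce the statement to showing that the single membership
\[
\sum_{j=1}^m\sum_{i=1}^{r_j}\frac{d(i,j)}{p_j^{\,i}}\in\mathbb{Z}
\]
forces $d(i,j)=0$ for every pair $(i,j)$; this is exactly $k(i,j)=l(i,j)$. I would carry this out in two stages: first decouple the contributions coming from the distinct primes $p_1,\dots,p_m$, and then treat each prime separately by a base-$p_j$ digit-uniqueness argument.

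For the first stage, fix an index $j$ and multiply the displayed membership by $Q_j:=\prod_{j'\neq j}p_{j'}^{\,r_{j'}}$. Every summand with $j'\neq j$ has denominator dividing $p_{j'}^{\,r_{j'}}\mid Q_j$, hence becomes an integer after multiplication, so the remaining quantity $Q_j\sum_{i=1}^{r_j} d(i,j)/p_j^{\,i}$ is itself an integer. Now $Q_j$ is coprime to $p_j$, while $\sum_{i=1}^{r_j} d(i,j)/p_j^{\,i}$ has denominator a power of $p_j$; writing this sum as $a/p_j^{\,r_j}$ in lowest admissible form, $Q_j a/p_j^{\,r_j}\in\mathbb{Z}$ together with $\gcd(Q_j,p_j)=1$ forces $p_j^{\,r_j}\mid a$, i.e. $\sum_{i=1}^{r_j} d(i,j)/p_j^{\,i}\in\mathbb{Z}$ on its own. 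Thus the single congruence splits into $m$ independent single-prime congruences.

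For the second stage I fix $j$, abbreviate $p:=p_j$, $r:=r_j$, and prove $d(i,j)=0$ by downward induction on $i$. Clearing denominators in $\sum_{i=1}^{r} d(i,j)/p^{\,i}\in\mathbb{Z}$ gives $\sum_{i=1}^{r} d(i,j)\,p^{\,r-i}\equiv 0\pmod{p^{\,r}}$; reducing modulo $p$ annihilates every term with $i<r$ and leaves $d(r,j)\equiv 0\pmod p$, so $|d(r,j)|\le p-1$ yields $d(r,j)=0$. Deleting that term leaves $\sum_{i=1}^{r-1} d(i,j)/p^{\,i}\in\mathbb{Z}$, and the same reduction modulo $p$ gives $d(r-1,j)=0$, and so on down to $i=1$. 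This is precisely the uniqueness of the base-$p$ expansion with signed digits bounded by $p-1$.

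Each stage is elementary, so I do not anticipate a serious obstacle; the conceptual content lies entirely in separating the primes. The one point demanding care is the coprimality step, where the distinctness of $p_1,\dots,p_m$ is used essentially: without it, $Q_j$ need not be coprime to $p_j$ and the single-prime reduction would not be available.
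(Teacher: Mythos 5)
Your proof is correct and follows essentially the same route as the paper's: both arguments clear denominators by multiplying with a product of the prime powers and then reduce modulo $p_j$ to isolate the deepest digit, the key input being the pairwise coprimality of the $p_j^{r_j}$. The only difference is organizational --- you decouple the primes first and then run a per-prime base-$p_j$ digit-uniqueness induction, whereas the paper folds both steps into a single induction on $\sum_j r_j$.
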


\begin{proof}
We proceed by induction on $S:=\sum_j r_j$. If $S=1$ then we have just one prime power in the denominators and the result is clear. Assume \eqref{eq1.1.1} holds. Let $P:=p_1^{r_1}\dots p_m^{r_m}$. Multiply \eqref{eq1.1.1} by $P$:
$$\sum_j\sum_i\frac{P}{p_j^i}k(i,j)\equiv \sum_j\sum_i \frac{P}{p_j^i}l(i,j).$$
Then the term that corresponds to $\frac{P}{p_1^{r_1}}$ is the only one that is not divisible by $p_1$. We can reduce the equality $\mod p_1$ and we get that $k(1,r_1)=l(1,r_1)$. Cancel these terms and the new $r_1$ becomes $r_1-1$ so we can apply the induction hypothesis to obtain the result.
\end{proof}

\medskip

\begin{lemma}\label{lem1.2}
The \L aba spectrum $\Lambda_{\textup{\L}}$ has minimal period $\frac1{p_{\textup{\L}}}$.
\end{lemma}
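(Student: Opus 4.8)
The plan is to reduce the computation of the minimal period of $\Lambda_{\textup{\L}}$ to finding the order of the stabilizer of the finite set $G:=\Gamma_{\textup{\L}}\bmod\bz$ inside the circle group $\bt=\br/\bz$. Indeed, by Proposition \ref{prop2.4} the number $1$ is a period, so the period lattice contains $\bz$ and, as recalled in the introduction, the minimal period has the form $\frac1P$ for a positive integer $P$; moreover $\frac1P$ is a period of $\Lambda_{\textup{\L}}$ exactly when $G+\frac1P=G$ in $\bt$. The set $H:=\{a\in\bt: G+a=G\}$ is a subgroup contained in $\frac1M\bz/\bz$, hence finite and cyclic, of the form $\frac1P\bz/\bz$, and the minimal period is $\frac1P$ with $P=\#H$. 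So it suffices to prove $\#H=p_{\textup{\L}}$.

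First I would record the product structure of $G$. Writing $M=p_1^{r_1}\cdots p_m^{r_m}$, the group $\frac1M\bz/\bz$ is the internal direct sum of its subgroups $\frac1{p_j^{r_j}}\bz/\bz$. Grouping the defining sum $\sum_{s\in S_A}\frac{k_s}{s}$ by the underlying prime, each element of $\Gamma_{\textup{\L}}$ is $\sum_j g_j$ with $g_j$ ranging over the single-prime set $G_j:=\{\sum_{p_j^i\in S_A}\frac{k_i}{p_j^i}:k_i\in\{0,\dots,p_j-1\}\}\subset\frac1{p_j^{r_j}}\bz/\bz$. Lemma \ref{lem1.1} says precisely that this decomposition is unique modulo $\bz$, so $G$ is the internal direct sum $\bigoplus_j G_j$, i.e. a box in $\bigoplus_j\frac1{p_j^{r_j}}\bz/\bz$. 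Since $0\in G_j$ for every $j$, a short argument then shows that the stabilizer splits, $H=\bigoplus_j H_j$ where $H_j:=\{a\in\frac1{p_j^{r_j}}\bz/\bz: G_j+a=G_j\}$, whence $\#H=\prod_j\#H_j$ and the problem reduces to the single-prime count.

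For a fixed prime $p=p_j$, let $E=\{i:p^i\in S_A\}$ and $\alpha=\alpha_j$, so that $\{1,\dots,\alpha\}\subseteq E$ while $\alpha+1\notin E$ (with $\alpha=0$ when $p\notin S_A$). Passing to $\bz/p^r\bz$ (with $r=r_j$) via $\frac1{p^i}\mapsto p^{r-i}$, the contributions of the initial block $i=1,\dots,\alpha$ already sweep out the entire subgroup $p^{r-\alpha}\bz/p^r\bz$ of order $p^\alpha$, so $G_j$ is invariant under this subgroup; in particular $\frac1{p^\alpha}$ is a period and $\#H_j\geq p^\alpha$. For the reverse inequality I would use that $H_j$, as a subgroup of the cyclic $p$-group $\bz/p^r\bz$, is determined by its order, so it suffices to rule out the period $\frac1{p^{\alpha+1}}$ when $\alpha<r$. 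Here the point is that $0\in G_j$ but $0+\frac1{p^{\alpha+1}}=\frac1{p^{\alpha+1}}$, whose unique base-$p$ representation $\sum_i\frac{k_i}{p^i}$ forces $k_{\alpha+1}=1$; since $\alpha+1\notin E$ this representation is unavailable, so $\frac1{p^{\alpha+1}}\notin G_j$ (again by the uniqueness in Lemma \ref{lem1.1}). Thus $\#H_j=p^\alpha=p_j^{\alpha_j}$.

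Combining the two counts gives $\#H=\prod_j p_j^{\alpha_j}=p_{\textup{\L}}$, so $H=\frac1{p_{\textup{\L}}}\bz/\bz$ and the minimal period of $\Lambda_{\textup{\L}}$ is $\frac1{p_{\textup{\L}}}$, as claimed. The main obstacle I anticipate is not any single computation but the bookkeeping that makes the factorization rigorous: verifying that $G$ is genuinely a product set across primes and that the stabilizer of such a product set is the product of the factor stabilizers. Both hinge on the uniqueness statement of Lemma \ref{lem1.1}, which is exactly what licenses the identification of $\frac1M\bz/\bz$ with $\bigoplus_j\frac1{p_j^{r_j}}\bz/\bz$ and the reading-off of coefficients prime-by-prime; once that is in place, the single-prime estimates are elementary.
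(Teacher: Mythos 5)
Your proof is correct, and it reaches the conclusion by a route that is organized differently from the paper's. The paper works directly with periods: it first uses Lemma \ref{lem1.1} to show that $\frac1{p^\alpha}$ cannot be a period when $p^\alpha\notin S_A$ (exactly your upper-bound step), then proves that each $\frac1{p_j^{\alpha_j}}$ is a period by induction on $i\leq\alpha_j$, adding $\frac1{p_j^{i+1}}$ to a generic element of $\Gamma_{\textup{\L}}$ and handling the carry case $k_{p_j^{i+1}}=p_j-1$ via the induction hypothesis; it then combines the different primes with a Bezout identity and closes by factoring the minimal period. Your version replaces the induction and the Bezout step with structural observations: the stabilizer $H$ of $G$ in $\bt$ is a subgroup of $\frac1M\bz/\bz$ (because $0\in G$), it splits through the CRT decomposition as $\bigoplus_j H_j$ because $G$ is the box $\prod_j G_j$ (uniqueness coming from Lemma \ref{lem1.1}), and within each prime the block $i=1,\dots,\alpha_j$ contributes the full subgroup $\frac1{p_j^{\alpha_j}}\bz/\bz$, so $G_j$ is a union of its cosets and invariance is immediate. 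What your approach buys is a cleaner positive direction --- no carrying, no induction, no Bezout --- at the cost of the group-theoretic bookkeeping (the box structure and the splitting of the stabilizer), which you correctly flag as the point needing care and which Lemma \ref{lem1.1} indeed licenses. The negative direction is essentially identical in both proofs; your extra remark that subgroups of a cyclic $p$-group form a chain, so that only $\frac1{p_j^{\alpha_j+1}}$ need be excluded, is a small but genuine economy over the paper, which excludes every $\frac1{p^\alpha}$ with $p^\alpha\notin S_A$.
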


\begin{proof}
First, we show that if $p$ is a prime and $p^\alpha$ is not in $S_A$ then $\frac{1}{p^\alpha}$ is not a period for $\Lambda_{\textup{\L}}$. If not then, since $0\in\Lambda_{\textup{\L}}$ we have that $\frac{1}{p^\alpha}$ is in $\Lambda_{\textup{\L}}$. Then there exist some numbers $k_s$ as in \eqref{eqls1} such that
$$\frac1{p^\alpha}\equiv \sum_{s\in S_A}\frac{k_s}{s} \ (\mod\bz).$$
But this would contradict Lemma \ref{lem1.1}.

Next, we show that $\frac{1}{p_j^{\alpha_j}}$ is a period of $\Lambda_{\textup{\L}}$. We show by induction on $i\leq \alpha_j$ that $\frac1{p_j^i}$ is a period for $\Lambda_{\textup{\L}}$. For $i=1$, we have
$$b:=\sum_{s\in S_A}\frac{k_s}{s}+\frac1{p_j}=\frac1{p_j}+\frac{k_{p_j}}{p_j}+\sum_{s\in S_A,s\neq p_j}\frac{k_s}{s}$$
and $a:=\frac{1}{p_j}+\frac{k_{p_j}}{p_j}=\frac{k_{p_j}+1}{p_j}$ if $k_{p_j}<p_j-1$ and $a\equiv 0 \ (\mod \bz)$ if $k_{p_j}=p_j-1$. Thus $b\in \Gamma_{\Lambda_{\textup{\L}}}+\bz$ and $\frac{1}{p_j}$ is a period.

Assume now $\frac{1}{p_j^i}$ is a period, $i<\alpha_j$. Note that by the definition of $\alpha_j$, we have that $p_j,\dots, p_j^{i+1}$ are all in $S_A$. Take
$$b:=\sum_{s\in S_A}\frac{k_s}{s}+\frac1{p_j^{i+1}}=\frac1{p_j^{i+1}}+\frac{k_{p_j^{i+1}}}{p_j^{i+1}}+\sum_{s\in S_A,s\neq p_j^{i+1}}\frac{k_s}{s}$$
and $$a:=\frac{1}{p_j^{i+1}}+\frac{k_{p_j^{i+1}}}{p_j^{i+1}}=\frac{k_{p_j^{i+1}}+1}{p_j^{i+1}}$$ if $k_{p_j^{i+1}}<p_j-1$ and $a= \frac{1}{p_j^i}$ if $k_{p_j^{i+1}}=p_j-1$. Thus, since $\frac{1}{p_j^i}$ is a period, we have $b\in \Gamma_{\textup{\L}}+\bz$ and $\frac{1}{p_j^{i+1}}$ is a period.

The next step is to show that $\frac1{p_\Laba}$ is a period. Take $p_1^{\alpha_1}$, $p_2^{\alpha_2}$. They are mutually prime, so there exist integers $m_1,m_2$ such that $m_1p_1^{\alpha_1}+m_2p_2^{\alpha_2}=1$. Then $m_2\frac{1}{p_1^{\alpha_1}}+m_1\frac1{p_2^{\alpha_2}}=\frac1{p_1^{\alpha_1}p_2^{\alpha_2}}$. Since both $\frac{1}{p_1^{\alpha_1}}$ and $\frac1{p_2^{\alpha_2}}$ are periods, it follows that $\frac{1}{p_1^{\alpha_1}p_2^{\alpha_2}}$ is a period. By induction, we get that $\frac1{p_\Laba}$ is a period for $\Lambda_\Laba$.

Finally, we show that $\frac1{p_\Laba}$ is the minimal period. Let $p$ be the minimal period. Since $1$ is a period we have that $p$ is of the form $p=\frac1k$ with $k\in\bz$. Consider the prime factor decomposition of $k=q_1^{\beta_1}\dots q_t^{\beta_t}$. Then $\frac{1}{q_j^i}$ is a period for any $i\leq \beta_j$. From the statements above, we get that all the $q_j$ must be among the factors $p_k$ of $p_\Laba$, $q_j=p_k$ and the power $\beta_j$ has to be less than or equal to the corresponding $\alpha_k$. Thus $k=p_\Laba$ and the result follows.
\end{proof}

\medskip

\begin{lemma}\label{lem1.3}
The Coven-Meyerowitz tiling set for $A$ is contained in $p_\Laba\bz$.
\end{lemma}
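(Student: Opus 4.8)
The plan is to reduce the statement about the set $\mathcal T_{CM}=B\oplus M\bz$ to a divisibility statement about the exponents of the mask polynomial $B(x)$. First I would note that $p_{\Laba}=\prod_j p_j^{\alpha_j}$ divides $M=\prod_j p_j^{r_j}$: indeed $\alpha_j\le r_j$, since whenever $\alpha_j\ge 1$ the prime power $p_j^{\alpha_j}$ lies in $\mathcal S_A$ and hence divides $M$. Therefore $M\bz\subseteq p_{\Laba}\bz$, and it suffices to prove $B\subseteq p_{\Laba}\bz$, i.e. that every exponent occurring in $B(x)$ is divisible by $p_{\Laba}$. Because the prime powers $p_1^{\alpha_1},\dots,p_m^{\alpha_m}$ are pairwise coprime, by the Chinese Remainder Theorem this is equivalent to showing, for each fixed $j$, that every exponent of $B(x)$ is divisible by $p_j^{\alpha_j}$.

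Next I would analyze the individual factors of $B(x)=\prod\Phi_s(x^{t(s)})$ from Definition \ref{def1.2}. For a factor indexed by $s=p_i^\beta$, note that $t(s)$, the largest divisor of $M$ coprime to $s$, depends only on the prime $p_i$: it equals $t_i:=M/p_i^{r_i}=\prod_{k\neq i}p_k^{r_k}$, which is coprime to $p_i$. Using the identity $\Phi_{p_i^\beta}(y)=\sum_{c=0}^{p_i-1}y^{c\,p_i^{\beta-1}}$, the exponents this factor contributes are exactly the numbers $c\,p_i^{\beta-1}t_i$ with $0\le c<p_i$. When $i\neq j$ the divisor $t_i$ contains the full prime power $p_j^{r_j}$, and $r_j\ge\alpha_j$, so every exponent coming from such a factor is already divisible by $p_j^{\alpha_j}$.

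The remaining and decisive case is $i=j$. Here $t_j$ is coprime to $p_j$, so divisibility of $c\,p_j^{\beta-1}t_j$ by $p_j^{\alpha_j}$ rests entirely on the power $p_j^{\beta-1}$. The key point — and the only place where the precise definition of $\alpha_j$ is used — is that a factor $\Phi_{p_j^\beta}(x^{t_j})$ appears in the product defining $B(x)$ only when $p_j^\beta\notin\mathcal S_A$; but by definition of $\alpha_j$ the powers $p_j,p_j^2,\dots,p_j^{\alpha_j}$ all lie in $\mathcal S_A$, so every surviving factor has $\beta\ge\alpha_j+1$, whence $p_j^{\beta-1}$ is divisible by $p_j^{\alpha_j}$. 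Thus every factor of $B(x)$, for both $i\neq j$ and $i=j$, contributes only exponents divisible by $p_j^{\alpha_j}$, and since exponents add when the factors are multiplied, every exponent of $B(x)$ is divisible by $p_j^{\alpha_j}$. Letting $j$ range over $1,\dots,m$ and invoking the coprimality above shows that every exponent of $B(x)$ is divisible by $p_{\Laba}$, so $B\subseteq p_{\Laba}\bz$ and hence $\mathcal T_{CM}=B\oplus M\bz\subseteq p_{\Laba}\bz$. I expect the only genuine obstacle to be recognizing that the factors $\Phi_{p_j^\beta}$ with $\beta\le\alpha_j$ are precisely those removed from the product by the condition $s\notin\mathcal S_A$; once this is observed, the rest is elementary bookkeeping with prime-power valuations.
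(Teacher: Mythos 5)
Your proof is correct and follows essentially the same route as the paper: expand each factor $\Phi_{p_i^\beta}(x^{t(s)})$ as $\sum_{c=0}^{p_i-1}x^{c\,p_i^{\beta-1}t(s)}$, use that $s\notin\mathcal S_A$ forces $\beta>\alpha_i$ while $t(s)$ carries every other prime to its full power $r_l\ge\alpha_l$, and conclude that all exponents of $B(x)$, and hence of $\mathcal T_{CM}=B\oplus M\bz$, lie in $p_{\Laba}\bz$. Your prime-by-prime organization via the Chinese Remainder Theorem is only a repackaging of the paper's direct check that each factor's exponents are divisible by $p_{\Laba}$.
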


\begin{proof}
Let $s=p_j^k$ be a prime power factor of $M$ which is not in $S_A$. We have that
$$\Phi_s(x^{t(s)})=1+x^{p_j^{k-1}t(s)}+x^{2p_j^{k-1}t(s)}+\dots+x^{(p_j-1)p_j^{k-1}t(s)}.$$
Since $s$ is not in $S_A$ it follows that $k>\alpha_j$. Also $t(s)$ is the largest factor of $M$ which is prime with $s$ so it contains all the other primes $p_l$, $l\neq j$ to the largest power (the one in the factorization of $M$). Thus $p_j^kt(s)$ is divisible by $p_\Laba$, and therefore all the powers that appear in $\Phi_{p_j^k}(x^{t(s)})$ are divisible by $p_\Laba$. So $B$ is contained in $p_\Laba\bz$. Clearly $M$ is divisible by $p_\Laba$ so $\mathcal T_{CM}\subset p_\Laba\bz$.
\end{proof}

\medskip

We now complete the proof of Theorem \ref{th0.2}.

\medskip

\noindent{\it Proof of Theorem \ref{th0.2}.} (i)  \  We check that the ({\bf Strong S-T(${\mathbb Z}$)}) holds if we have CM-property for the spectral sets on ${\mathbb Z}$. Take a finite union of intervals with integer endpoints which is spectral. We can take it to be of the form $\Omega=A+[0,1)$. Since $\Omega$ is spectral it follows that $A$ is a spectral subset of $\bz$. Then $A$ satisfies the CM-property. Take a spectrum $\Lambda$ for $\Omega$. By Lemma \ref{lem1.4}, the minimal period of $\Lambda$ is of the form $\frac1P$ with $P$ a divisor of $p_\Laba$. We have to check that $\Omega$ tiles $\br$ with a subset of $P\bz$. But, we know that $\Omega$ tiles $\br$ with the Coven-Meyerowitz tiling set $\mathcal T_{CM}$ which by Lemma \ref{lem1.3} is contained in $p_\Laba\bz$ and, since $P$ is a divisor of $p_\Laba$, we have also that $p_\Laba\bz\subset P\bz$. Thereofore $\mathcal T_{CM}\subset P\bz$.

\medskip

(ii) \ We prove this part by showing that {\bf (USC(${\mathbb Z}$))} holds. Let ${\mathcal B}$ be a finite family of sets in $\bz$ and let  $A\subset \bz$ be such that $A\oplus B=\bz_L$ for all $B\in\mathcal B$. By translation, we may assume all $B$ are in $\bz^{+}\cup\{0\}$. We claim that all $S_{B}$ are equal for all $B\in {\mathcal B}$. Let ${\mathbb N}_L = \{0,1\cdots,L-1\}$ and then $A\oplus B=\bz_L$ implies
$$
A(x)B(x) = (1+x+\cdots+x^{L-1})Q(x) = \prod_{d|L}\Phi_d(x)Q(x)
$$
where $Q(x)$ is some polynomial with integer coefficients and $Q(1)=1$. We therefore see that $S_A\cup S_B = S_{{\mathbb N}_L}$ and the union is disjoint (otherwise $Q(1)\neq 1$). Hence, $S_B = S_{{\mathbb N}_L}\setminus S_A$ for all $B\in{\mathcal B}$. Now,  by the hypothesis, all $B$ in ${\mathcal B}$  have the CM-property, so they have the {\L}aba spectrum $\Gamma_{{\Laba}}$, note that $\Gamma_{{\Laba}}$ depends only on $S_{B}$ and the sets $S_{B}$ are now all equal. Therefore, the {\L}aba spectrum is the common spectrum of all $B$.
\qquad$\Box$

\medskip

\medskip

\begin{acknowledgements}
This work was partially supported by a grant from the Simons Foundation (\#228539 to Dorin Dutkay).
\end{acknowledgements}

\bibliographystyle{alpha}	
\bibliography{eframes}

\end{document}